\newtheorem{dummy}{}[section]
\newtheorem{theorem}[dummy]{Theorem}
\newtheorem*{ELSV}{ELSV Formula}
\newtheorem*{mainresult}{Main Result}
\newtheorem{corollary}[dummy]{Corollary}
\newtheorem{proposition}[dummy]{Proposition}
\newtheorem{lemma}[dummy]{Lemma}
\theoremstyle{definition}
\newtheorem{example}[dummy]{Example}
\newtheorem{remark}[dummy]{Remark}
\newtheoremstyle{theorem}
                 {10pt}
                 {0pt}
                 {}
                 {}
                 {\bfseries}
                 {}
                 {\newline}
                 {}
\theoremstyle{theorem}
\newtheorem*{remarks}{Remarks}
\newcommand{\Z}{\mathbb{Z}}
\renewcommand{\P}{\mathbb{P}}
\newcommand{\C}{\mathbb{C}}
\newcommand{\Q}{\mathbb{Q}}
\newcommand{\h}{\mathrm{h}}
\renewcommand{\t}{\mathrm{t}}
\renewcommand{\k}{\mathrm{k}}
\renewcommand{\v}{\mathrm{\mathbf{v}}}
\renewcommand{\P}{\mathcal{P}}
\DeclareRobustCommand{\cev}[1]{
  {\mathpalette\do@cev{#1}}
}
\newcommand{\do@cev}[2]{
  \vbox{\offinterlineskip
    \sbox\z@{$\m@th#1 x$}
    \ialign{##\cr
      \hidewidth\reflectbox{$\m@th#1\vec{}\mkern4mu$}\hidewidth\cr
      \noalign{\kern-\ht\z@}
      $\m@th#1#2$\cr
    }
  }
}
\numberwithin{equation}{section}
\begin{document}

\title{Polynomiality of factorizations in reflection groups}
\author[E.~Polak]{Elzbieta Polak}
\address{Department of Mathematics, The University of Texas at Austin, 2515 Speedway Stop C1200, Austin, Texas 78712, USA}
\email{epolak@utexas.edu}
\author[D.~Ross]{Dustin Ross}
\address{Department of Mathematics, San Francisco State University, 1600 Holloway Avenue, San Francisco, California 94132, USA}
\email{rossd@sfsu.edu}

\begin{abstract}
We study the number of ways of factoring elements in the complex reflection groups $G(r,s,n)$ as products of reflections. We prove a result that compares factorization numbers in $G(r,s,n)$ to those in the symmetric group $S_n$, and we use this comparison, along with the ELSV formula, to deduce a polynomial structure for factorizations in $G(r,s,n)$.
\end{abstract}

\maketitle

\section{Introduction}

Given a set of generators $R$ of a multiplicative group $G$, we ask the natural enumerative question: How many ways can an element $\omega\in G$ be factored as a product of $m$ elements from $R$? Given $G$ and $R$, we denote these integer counts by $f_m^\omega\in\Z_{\geq 0}$. We seek to understand the structural properties of these numbers, especially in the setting of reflection groups. 

A special case of this setup is the classical problem of counting factorizations of a given permutation $\omega\in S_n$ as a product of transpositions. It is well known that such factorizations in $S_n$ are equivalent to degree-$n$ holomorphic maps from a complex curve to the projective line with ramification specified by $\omega$ over one point and simple ramification at $m$ additional points \cite{CavalieriMiles}. These counts are of long-standing interest in combinatorics, functional analysis, algebraic geometry, integrable systems, and physics, dating back at least to the pioneering work of Hurwitz in the late 19th century \cite{Hurwitz}. 

Hurwitz's foundational work suggested the importance of studying \emph{transitive} factorizations in $S_n$---that is, factorizations $\rho_m\cdots\rho_1=\omega$ where the subgroup generated by the transpositions $\rho_1,\dots,\rho_m$ acts transitively on $\{1,\dots,n\}$. In terms of maps of complex curves, transitivity is equivalent to the condition that the domain is topologically connected. Let $\widetilde f_m^\omega$ be the number of length-$m$ transitive factorizations of $\omega\in S_n$. One of the most remarkable results about these numbers is the celebrated ESLV formula, proved by Ekedahl, Lando, Shapiro, and Vainshtein, which relates the counts of transitive factorizations to intersection numbers on moduli spaces of curves.

\begin{ELSV}[\cite{ELSV}]
If $\omega\in S_n$ has cycle type $(n_1,\dots,n_\ell)$ and $g=\frac{1}{2}(m-n-\ell+2)$, then
\[
\widetilde f_m^\omega=m!\prod_{i=1}^\ell \frac{n_i^{n_i+1}}{n_i!}P_{g,\ell}(n_1,\dots,n_\ell)
\]
where $P_{g,\ell}$ is the symmetric polynomial
\[
P_{g,\ell}(x_1,\dots,x_\ell)=\int_{\mathcal{\overline{M}}_{g,\ell}}\frac{1-\lambda_1+\lambda_2-\cdots+(-1)^g\lambda_g}{(1-x_1\psi_1)\cdots(1-x_\ell\psi_\ell)}\in\Q[x_1,\dots,x_\ell].
\]
\end{ELSV}

The polynomial structure implied by the ELSV formula is truly striking and imposes a great deal of structure on the factorization numbers. 
Not only is each $P_{g,\ell}$ a symmetric polynomial, but the degrees of the nonzero terms lie in the interval $[2g-3+\ell,3g-3+\ell]$.  In other words, there is an effective bound on the number of nonzero coefficients in each $P_{g,\ell}$, and those finitely-many coefficients then determine the infinitely-many values of $\widetilde f_m^\omega$ obtained by fixing $g$ and $l$ while varying $m$ and $(n_1,\dots,n_\ell)$. Although this polynomial structure was proved by the ELSV formula, it had been discovered earlier by Goulden, Jackson, and Vainshtein \cite{GJV}.

It is often the case that results about symmetric groups are special cases of phenomena that hold more generally for reflection groups, and polynomiality is not an exception to this rule. In this paper, we generalize all aspects of the polynomial structure implied by the ELSV formula to the infinite family of complex reflection groups $G(r,s,n)$.



To state the main result, we give a brief overview of notation (see Section \ref{section:definitions} for precise definitions). We are interested in the complex reflection groups $G(r,s,n)$ where $r,s$, and $n$ are positive integers with $s\mid r$. Each of these groups is generated by a set of reflections $R\subseteq G(r,s,n)$, and for any $\omega\in G(r,s,n)$, we let $\widetilde f_m^\omega$ denote the number of \emph{connected} factorizations of $\omega$ into $m$ reflections. There is a natural group homomorphism $\pi:G(r,s,n)\rightarrow S_n$ and a function $\delta:G(r,s,n)\rightarrow\{0,1\}$; the main result is stated in terms of these functions.

\begin{mainresult}[Theorem \ref{theorem:mainresult}]
Fix $r,s\in\Z_{>0}$ such that $s\mid r$. For any $g,\ell\in\Z_{\geq 0}$, there exist two symmetric polynomials
\[
P_{g,\ell}^{0},P_{g,\ell}^{1}\in \Q[x_1,\dots,x_\ell]
\]
depending on $r$ and $s$ such that, if $\pi(\omega)$ has cycle type $(n_1,\dots,n_\ell)$ and $g=\frac{1}{2}(m-n-\ell+2)$, then
\[
\widetilde f_m^\omega=\frac{m!}{r^{n-1}}\prod_{i=1}^\ell \frac{n_i^{n_i+1}}{n_i!}P_{g,\ell}^{\delta(\omega)}(n_1,\dots,n_\ell).
\]
In addition, the nonzero terms in the polynomials $P_{g,\ell}^{i}$ all have degrees in the interval 
\[
[2g-3+\ell,3g-3+\ell].
\]
\end{mainresult}

\begin{remarks}\mbox{}
\vspace*{-\parsep}
\vspace*{-\baselineskip}
\begin{enumerate}
\item The ``connected'' condition for factorizations in $G(r,s,n)$ is a natural generalization of the ``transitive'' condition in $S_n$. It is shown in Proposition \ref{prop:connected} that connected factorization numbers in $G(r,s,n)$ determine all factorization numbers.

\item The proof of the main result expresses each $P_{g,\ell}^i$ as an explicit linear combination of the polynomials $P_{g',\ell}$ with $g'\leq g$, where the latter polynomials are those that appear in the ELSV formula. Thus, in cases where we have explicit formulas for $P_{g',\ell}$ for $g'\leq g$, we also obtain explicit formulas for $P_{g,\ell}^i$.

\item By the classification of Shephard and Todd \cite{ShephardTodd}, the infinite family $G(r,s,n)$ comprises all but 34 irreducible complex reflection groups. It is currently unclear whether one should expect a uniform polynomial structure that extends to the exceptional groups. The authors leave the investigation of such a generalization to future work.

\item Given the bounds on the degree of the polynomials $P_{g,\ell}^i$, one might hope that there is an interpretation of these polynomials in terms of intersection numbers on appropriate generalizations of the moduli spaces $\overline{\mathcal{M}}_{g,\ell}$. Finding such an interpretation would be very interesting, especially if it could be extended to the exceptional groups, and the authors also leave these investigations to future work.
\end{enumerate}
\end{remarks}

\subsection{Relation to previous work}

Many recent results in the literature have begun to uncover the structure inherent to factorizations in complex reflection groups \cite{BGJ,ChapuyStump,Michel,dHR,Douvropoulos,LewisMorales}. In much of the existing literature, the authors fix an element of a complex reflection group and compute an explicit formula for the generating series of all factorizations of that element. The formulas that have been found are quite compelling, especially the uniform formula discovered by Chapuy and Stump for factoring Coxeter elements in well-generated complex reflection groups \cite{ChapuyStump}, generalizing a result of Jackson that computes all factorizations of a long cycle in a symmetric group \cite{Jackson}.

The polynomial structure presented in this paper is somewhat orthogonal to the previous results. In particular, the previous results in the work cited above studied formulas for a fixed $\omega$ and varying $m$, which is equivalent to fixing $\ell$ while varying $g$. The polynomial structure, on the other hand, only arises when we fix $g$ and $\ell$ and we vary $\omega$, not just in a single group, but throughout all $G(r,s,n)$ with $n\geq \ell$.

Another distinction between this work and the previous papers is that most of the previous results use Burnside's character formula to study factorization numbers. Since our focus is on connected factorizations, these techniques are less immediately applicable.

While the polynomials in this paper are not given by explicit formulas, they provide a very general structural understanding of factorization numbers for complex reflection groups. Most of the previous results with explicit formulas study factorization numbers of Coxeter elements in well-generated complex reflection groups (Douvropoulos also extended these formulas to \emph{regular} elements \cite{Douvropoulos}). In the family $G(r,s,n)$, the only well-generated groups are those for which $s=1$ or $s=r$, and the Coxeter elements are very special, generalizing the long cycle in the case of $S_n$. Polynomiality, on the other hand, reveals a structure inherent to the collection of \emph{all} factorization numbers for \emph{all} groups $G(r,s,n)$.

However, if one requires explicit formulas for factoring specific elements in $G(r,s,n)$, then the methods of this paper are also useful. In particular, Corollary \ref{corollary:series} provides an explicit comparison between the factorization series of $\omega\in G(r,s,n)$ and that of $\pi(\omega)\in S_n$.  As an example application, we use the known formula for calculating factorizations of long cycles in $S_n$ to write a generating series for factoring long cycles in $G(r,s,n)$ (Corollary \ref{corollary:explicit}).

\subsection{Plan of the paper}

Section \ref{section:definitions} collects information about complex reflection groups and various types of factorizations. We begin in \ref{subsec:complex} with a review of complex reflection groups, describing in detail the groups $G(r,s,n)$ that are of primary interest in this work. In \ref{subsec:factorizations}, we define the particular factorization numbers $f_m^\omega$ that we study, along with a refinement $f_{m_1,m_2}^\omega$ that will be important. In \ref{subsec:graphs}, we introduce a useful graph-theoretic interpretation of these factorizations. We close Section \ref{section:definitions} with a discussion of connected factorization numbers $\widetilde f_m^\omega$ and $\widetilde f_{m_1,m_2}^\omega$, and we describe how to recover all factorizations from the connected ones.

Section \ref{sec:results} contains the main results of this paper. These results all follow from Theorem \ref{theorem:comparison}, which compares factorization numbers for $\omega\in G(r,s,n)$ with those of $\pi(\omega)\in S_n$. More specifically, Theorem \ref{theorem:comparison} expresses $\widetilde f_m^\omega$ as a linear combination of the numbers $\widetilde f_{m'}^{\pi(\omega)}$ with $m'\leq m$. From this comparison result and the polynomiality implied by the ELSV formula, it then follows that the factorization numbers $\widetilde f_m^\omega$ are determined by polynomials. That the bounds on the degree work out so nicely was a pleasant surprise, and is a result of the specific structure of the comparison in Theorem \ref{theorem:comparison}. We close Section \ref{sec:results} by providing an explicit comparison between factorization series of $\omega\in G(r,s,n)$ and $\pi(\omega)\in S_n$, and we use this to compute a formula for the factorization series of long cycles in $G(r,s,n)$.

\subsection{Acknowledgements}

The authors are grateful to Stanza Coffee on 16th Street in San Francisco for providing a pleasant environment in which to ponder polynomiality, and they are especially indebted to Luis for pouring the tastiest espressos that fueled their progress.

The second named author learned about the results of \cite{ChapuyStump} at the Workshop on Moduli Spaces, Integrable Systems, and Topological Recursions hosted by the Universit\'e de Montr\'eal in January 2016; he thanks the organizers for the invitation to participate and Guillaume Chapuy for his talk on the results of \cite{ChapuyStump}, which initiated this investigation.

This work was partially supported by the National Science Foundation (RUI DMS-2001439).

\section{Factorizations in complex reflection groups}\label{section:definitions}

In this section, we collect definitions and examples of complex reflection groups, with a special emphasis on the groups $G(r,s,n)$. We then describe the different types of factorizations that we are interested in, and we introduce a graph-theoretic interpretation of those factorizations.

\subsection{Complex reflections groups}\label{subsec:complex}

Let $V$ be a finite-dimensional complex vector space. A linear transformation $\rho\in\mathrm{GL(V)}$ is said to be a \emph{reflection of V} if it has finite order and if the fixed-point set
\[
\{\v\in V\;|\;\rho(\v)=\v\}
\]
is a complex hyperplane in $V$. A \emph{complex reflection group} is a finite subgroup $G\subset \mathrm{GL}(V)$ that is generated by reflections. 

The data of a complex reflection group is more than just an abstract group; the complex vector space $V$ and an embedding in $\mathrm{GL}(V)$ must also be specified. As opposed to reflections of real vector spaces, reflections of complex vector spaces do not necessarily have order two. A simple example of a complex reflection with order greater than two is multiplication by a primitive $r$th root of unity, with $r>0$, viewed as an element of $\mathrm{GL}(\C)$.

The next example provides a general description of all of the reflections that will be considered in this paper and establishes notation that will be used throughout.

\begin{example}\label{ex:reflections}
Let $V=\C^n$ and let $\zeta_r=\mathrm{e}^{2\pi\mathrm{i}/r}$ be a primitive $r$th root of unity. 
\begin{enumerate}
\item For each $1\leq i<j\leq n$ and $k/r\in\Q\cap[0,1)$, define the linear transformation $\sigma_{ij}^{k/r}\in\mathrm{GL}(V)$ to be the function that transposes the $i$th and $j$th coordinates then multiplies them by $\zeta_r^{-k}$ and $\zeta_r^{k}$, respectively:\footnote{Note that $\zeta_r^{\pm k}$ is independent of the representative we choose for the rational number $k/r$.}
\[
\sigma_{ij}^{k/r}(a_1,\dots,a_i,\dots,a_j,\dots,a_n)=(a_1,\dots,\zeta_r^{-k} a_j,\dots,\zeta_r^{k}a_i,\dots,a_n).
\]
The transformation $\sigma_{ij}^{k/r}$ is a reflection of $V$ because it has finite order $2$ and the fixed-point set is the hyperplane defined by the linear equation $x_j=\zeta_r^kx_i$. When $k/r=0$, we often omit it from the notation and write $\sigma_{ij}=\sigma_{ij}^0$.

\item For each $1\leq i\leq n$ and each $k/r\in\Q\cap(0,1)$, define the linear transformation $\tau_i^{k/r}\in\mathrm{GL}(V)$ to be the function that multiplies the $i$th coordinate by $\zeta_r^k$:
\[
\tau_i^{k/r}(a_1,\dots,a_i,\dots,a_n)=(a_1,\dots,\zeta_r^k a_i,\dots, a_n).
\]
The transformation $\tau_i^{k/r}$ is a reflection of $V$ because it has finite order equal to the smallest positive integer $d$ such that $r\mid dk$ and the fixed-point set is the hyperplane defined by $x_i=0$.
\end{enumerate}
\end{example}

The previous example provides us with a wealth of complex reflections. The next two examples describe the complex reflection groups that can be generated by sets of these reflections. We begin with the most classical example: the symmetric group.

\begin{example}\label{ex:Sn}
Let $V=\C^n$. The complex reflection group in $\mathrm{GL}(V)$ generated by 
\[
\left\{\sigma_{ij}\;|\; 1\leq i<j\leq n\right\}
\] 
is isomorphic to the symmetric group $S_n$, which is embedded in $\mathrm{GL}(V)$ as the set of linear transformations that permute the coordinates of $V$. Concretely, $S_n$ can be described as the set of \emph{permutation matrices}---the $n\times n$ matrices with a single $1$ in each row and column and zeros elsewhere. These matrices act on $V=\C^n$ by matrix multiplication on the left.
\end{example}

The class of complex reflection groups that are of primary interest in this work are a natural generalization of the symmetric group. They are described in the following example.

\begin{example}\label{ex:G(r,s,n)}
Let $V=\C^n$ and let $r$ and $s$ be positive integers such that $s\mid r$. The complex reflection group $G(r,s,n)\subset\mathrm{GL(V)}$ is the group generated by
\begin{equation}\label{eq:generators}
\left\{\sigma_{ij}^{k/r}\;\Big|\; {1\leq i<j\leq n \atop 0\leq k< r}\right\}\cup \left\{\tau_i^{sk/r}\;\Big|\; {1\leq i\leq n \atop 0< k< r/s}\right\}.
\end{equation}
Concretely, the elements of $G(r,s,n)$ are the $n\times n$ matrices (acting on $V$ by matrix multiplication on the left) that satisfy the following three conditions:
\begin{enumerate}
\item each row and column has a single nonzero entry,
\item each nonzero entry is an $r$th root of unity, and
\item the product of the nonzero entries is an $(r/s)$th root of unity.
\end{enumerate}
It can be checked that the generating set in \eqref{eq:generators} is a complete list of reflections in $G(r,s,n)$. We let $R$ denote this set and write
\[
R=R_1 \sqcup R_2,
\] 
where $R_1=\{\sigma_{ij}^{k/r}\}$ and $R_2=\{\tau_i^{sk/r}\}$. Notice that $R_2=\emptyset$ unless $s<r$.

As familiar special cases of $G(r,s,n)$, observe that $G(1,1,n)$ is the symmetric group $S_n$, while $G(r,s,1)=\mu_{r/s}$ is the group of $(r/s)$th roots of unity. It can also be shown that the dihedral groups are isomorphic to $G(r,r,2)$ for $r>2$. For example, the symmetries of the square are isomorphic to $G(4,4,2)$.
\end{example}

Irreducible complex reflection groups were classified by Shephard and Todd in \cite{ShephardTodd}. In addition to the infinite family $G(r,s,n)$ described in Example \ref{ex:G(r,s,n)}, Shephard and Todd described a list of 34 exceptional groups, and they showed that every complex reflection group can be decomposed uniquely as a product, each factor of which is either $G(r,s,n)$ for some $(r,s,n)$ or one of the exceptional groups. As the only infinite family in the classification, the groups of the form $G(r,s,n)$ play an especially important role in the theory of complex reflection groups.

There are two natural group homomorphisms between complex reflection groups that are important. The first is the homomorphism 
\[
\pi:G(r,s,n)\rightarrow S_n
\]
that replaces every nonzero entry in a matrix $\omega\in G(r,s,n)$ with $1$. The second is the homomorphism
\[
\phi:G(r,s,n)\rightarrow \mu_{r/s}
\]
that computes the product of all of the nonzero entries in a matrix $\omega\in G(r,s,n)$. The function $\delta$ appearing in the statement of polynomiality is related to the homomorphism $\phi$; it is defined by
\begin{align*}
\delta:G(r,s,n)&\rightarrow\{0,1\}\\
\omega&\mapsto\begin{cases}
1&\text{ if }\phi(\omega)=1,\\
0&\text{ if }\phi(\omega)\neq 1.
\end{cases}
\end{align*}

\subsection{Factorizations}\label{subsec:factorizations}

Given an element $\omega\in G(r,s,n)$, our goal is to study the number of ways it can be factored into reflections. More precisely, define the factorization set $F^\omega_m\subseteq R^m$ to be the set of ways to write $\omega$ as a product of $m$ reflections:
\[
F^\omega_m:=\left\{(\rho_1,\dots,\rho_m)\in R^m\;|\;\rho_m\cdots \rho_1=\omega\right\}.
\]
We are interested in the size of this factorization set:
\[
f^\omega_m:=|F^\omega_m|.
\]
Notice that the notation $(r,s,n)$ has been omitted from these definitions because it is encoded by the element $\omega$, which is always understood to be an element of some $G(r,s,n)$.

Given a factorization
\[
\rho_m\cdots \rho_1=\omega\in G(r,s,n),
\]
we obtain an identity in $S_n$ by applying the homomorphism $\pi$:
\[
\pi(\rho_m)\cdots\pi(\rho_1)=\pi(\omega)\in S_n.
\]
However, $\left(\pi(\rho_1),\dots,\pi(\rho_m)\right)$ is not necessarily an element of $F^{\pi(\omega)}_m$, because $\pi(\rho_j)$ will be the identity whenever $\rho_j\in R_2$. Thus, if we want to compare factorizations in $G(r,s,n)$ with factorizations in $S_n$, it makes sense to refine the factorizations by the number of factors that belong to $R_1$ and $R_2$.

For an element $\omega\in G(r,s,n)$ and nonnegative integers $m_1$ and $m_2$ such that $m=m_1+m_2$, define the refined factorization set as
\[
F^\omega_{m_1,m_2}:=\left\{(\rho_1,\dots,\rho_m)\in R^{m}\;\Big|\;{\rho_m\cdots \rho_1=\omega \atop |\{i\;|\;\rho_i\in R_j\}|=m_j}\right\}\subseteq F_m^\omega,
\]
and define
\[
f^\omega_{m_1,m_2}:=|F^\omega_{m_1,m_2}|.
\]
Notice that 
\[
f^\omega_{m}=\sum_{m_1+m_2=m}f_{m_1,m_2}^\omega.
\]
Applying the homomorphism $\pi$ to each factor then defines a function from the factorization set $F^\omega_{m_1,m_2}$ to the factorization set $F^{\pi(\omega)}_{m_1}$, which, by a slight abuse of notation, we also denote by $\pi$:
\begin{equation}\label{eq:map}
\pi:F^\omega_{m_1,m_2}\rightarrow F^{\pi(\omega)}_{m_1}.
\end{equation}
In order to compare the number of factorizations of $\omega\in G(r,s,n)$ to the number of factorizations of $\pi(\omega)\in S_n$, it suffices to understand the preimages of the function in \eqref{eq:map}.

\subsection{Factorization graphs}\label{subsec:graphs}

To assist in our study of the factorization sets introduced in the previous subsection, we introduce a combinatorial interpretation of those sets in terms of decorated graphs, and we prove several results concerning these graphs that will be useful in Section \ref{sec:results}.

For our purposes, a \emph{graph} on the vertex set $V=[n]=\{1,\dots,n\}$ consists of an ordered set of edges $E=(e_1,\dots,e_m)$ where each edge $e\in E$ is a multiset of order two: $e=\{i,j\}$ with $i,j\in [n]$. For clarity, it's worth noting that
\begin{itemize}
\item the set of edges is ordered, and
\item self-edges and multiple edges are allowed.
\end{itemize}
We often require subsets of edges to preserve the ordering, and we write $E'\preceq E$ to denote a subset of edges with the induced ordering. The ordered set of edges containing two distinct vertices is denoted $E_1\preceq E$ and the ordered set of self-edges is denoted $E_2\preceq E$.

The graphs that encode factorizations in $G(r,s,n)$ have an additional edge labeling. In particular, we decorate each edge $e\in E$ by an integer $\k(e)$ that satisfies the following constraints:
\[
\begin{cases} 
0\leq \k(e)< r &\text{ if }e\in E_1,\\
0< \k(e)< r/s &\text{ if }e\in E_2.
\end{cases}
 \]
 
We define $\Gamma_m(r,s,n)$ to be the set of graphs with $m$ ordered edges on the vertex set $[n]$ along with an edge labeling as above. If $r=s$, then $E_2=\emptyset$, because self-edges are impossible to label under the above constraints. Let $\Gamma_{m_1,m_2}(r,s,n)\subseteq\Gamma_m(r,s,n)$ denote the subset of graphs such that $|E_1|=m_1$ and $|E_2|=m_2$.

\begin{example}\label{runningexample}
The following is a depiction of a graph in $\Gamma_{5,2}(6,2,4)$. Each edge is the multiset containing the two vertices it attaches; for example, $e_5=\{3,4\}$ and $e_7=\{1,1\}$. To keep the graph uncluttered, the edge labels are listed to the right instead of along the edges.
\vspace{-30bp}
\begin{center}
\tikz{
\node[circle, draw] (a) at (0,0) {$1$};
\node[circle, draw] (b) at (3,0) {$2$};
\node[circle, draw] (c) at (3,3) {$3$};
\node[circle, draw] (d) at (0,3) {$4$};
\draw[thick] (a) to node [below] {$e_4$} (b);
\draw[thick] (b) to node [right] {$e_2$} (c);
\draw[thick] (c) to [bend left] node [above] {$e_5$} (d);
\draw[thick] (a) to node [above left] {$e_6$} (c);
\draw[thick] (a) to [out=180,in=270,looseness=10] node [left] {$e_7$} (a); 
\draw[thick] (d) to [out=90,in=180,looseness=10] node [left] {$e_3$} (d); 
\draw[thick] (c) to [bend right] node [above] {$e_1$} (d);

\node at (6,3.9) {$\k(e_1)=5$};
\node at (6,3.1) {$\k(e_2)=0$};
\node at (6,2.3) {$\k(e_3)=2$};
\node at (6,1.5) {$\k(e_4)=1$};
\node at (6,.7) {$\k(e_5)=3$};
\node at (6,-.1) {$\k(e_6)=4$};
\node at (6,-.9) {$\k(e_7)=1$};
}
\end{center}
\vspace{-30bp}
\end{example}

The reason we have introduced decorated graphs is because each graph in $\Gamma_m(r,s,n)$ corresponds to an $m$-tuple of reflections $(\rho_1,\dots,\rho_m)\in R^m\subseteq G(r,s,n)^m$ in a natural way. Specifically, for each edge $e\in E$, we associate a reflection $\rho\in R$ via the rule
\begin{equation}\label{eq:bijection1}
\rho=\begin{cases}
\sigma_{ij}^{\k(e)/r} &\text{ if } e=\{i,j\}\in E_1\text{ with } i<j,\\
\tau_i^{s\k(e)/r} &\text{ if } e=\{i,i\}\in E_2.
\end{cases}
\end{equation}
Conversely, given an $m$-tuple of reflections $(\rho_1,\dots,\rho_m)\in R^m\subseteq G(r,s,n)^m$, we associate a decorated graph in $\Gamma_m(r,s,n)$ with edges and edge labels specified by
\begin{equation}\label{eq:bijection2}
(e,\k(e))=\begin{cases}
\left(\{i,j\},k\right) &\text{ if } \rho=\sigma_{ij}^{k/r}\in R_1,\\
\left(\{i,i\},k\right) &\text{ if } \rho=\tau_{i}^{sk/r}\in R_2.
\end{cases}
\end{equation}
The identifications in \eqref{eq:bijection1} and \eqref{eq:bijection2} are inverse to each other, and we henceforth use them to identify the set of graphs in $\Gamma_m(r,s,n)$ with the set of $m$-tuples of reflections in $G(r,s,n)$:
\begin{equation}\label{eq:bijection}
\Gamma_m(r,s,n)=R^m\subseteq G(r,s,n)^m.
\end{equation}
The subset $\Gamma_{m_1,m_2}(r,s,n)\subseteq\Gamma_m(r,s,n)=R^m$ corresponds to those $m$-tuples of reflections where $m_1$ of the reflections belong to $R_1$ and $m_2$ of the reflections belong to $R_2$.

\begin{example}\label{example:tupleofreflections}
The decorated graph in $\Gamma_{5,2}(6,2,4)$ depicted in Example \ref{runningexample} is associated to the following $7$-tuple of reflections in $G(6,2,4)$:
\[
\left(\sigma_{34}^5, \sigma_{23}^0,\tau_4^4, \sigma_{12}^1, \sigma_{34}^3, \sigma_{13}^4, \tau_1^2\right).
\]
\end{example}

Given a graph $\gamma\in\Gamma_m(r,s,n)$, let $\omega$ be the product of the corresponding $m$-tuple of reflections: $\omega=\rho_m\cdots\rho_1$. Then, by definition, $(\rho_1,\dots,\rho_m)\in F^\omega_m$. For each $\omega\in G(r,s,n)$, let $\Gamma^\omega_m\subseteq\Gamma_m(r,s,n)$ and  $\Gamma^\omega_{m_1,m_2}\subseteq\Gamma_{m_1,m_2}(r,s,n)$ denote the subsets of graphs whose corresponding reflections multiply to $\omega$. By definition, the identification \eqref{eq:bijection} induces an identification of these subsets with the appropriate factorization sets:
\vspace{-.8cm}
\begin{center}
\begin{align*}
\Gamma_m(r,s,n)  = & \;R^m \\
&\hspace{-1.5cm}\rotatebox[origin=c]{90}{$\subseteq$}  \hspace{1.4cm} \rotatebox[origin=c]{90}{$\subseteq$}\\
\Gamma^\omega_m \;\;\;=\;\; & \;F_m^\omega\\
&\hspace{-1.5cm}\rotatebox[origin=c]{90}{$\subseteq$}  \hspace{1.4cm} \rotatebox[origin=c]{90}{$\subseteq$}\\
\Gamma^\omega_{m_1,m_2} \;=\;\; & \;F_{m_1,m_2}^\omega.
\end{align*}
\end{center}

Our next goal is to better understand the subsets $\Gamma_m^\omega\subseteq \Gamma_m(r,s,n)$. In particular, given a graph in $\Gamma_m(r,s,n)$, we know that it belongs to $\Gamma_m^\omega$ for some $\omega$, and we would like to be able to describe $\omega$ in terms of the graph. For this, we turn to a discussion of \emph{ordered edge walks}.

Let $\gamma\in \Gamma_{m}(r,s,n)$ be a graph with edge set $E=(e_1,\dots,e_m)$. A \emph{directed edge} $\vec e=(i_0,i_1)$ consists of an edge $e=\{i_0,i_1\}\in E$ along with a choice of ordering of the two vertices. Notice that every edge in $E_1$ has two possible directions, while edges in $E_2$ have only one possible direction. Given a directed edge $\vec e=(i_0,i_1)$, we denote the \emph{head} and \emph{tail} by $\h(\vec e)=i_1$ and $\t(\vec e)=i_0$, respectively. A \emph{walk} in $\gamma$ is a set of directed edges $w=(\vec s_1,\dots,\vec s_\ell)$ such that $\h(\vec s_j)=\t(\vec s_{j+1})$ for $j=1,\dots,\ell-1$. A \emph{step} of the walk refers to a single directed edge $\vec s_j$. The \emph{start} and \emph{end} of the walk refer to $\t(\vec s_1)$ and $\h(\vec s_\ell)$, respectively. We use the following notation to denote walks:
\[
w=(i_0\stackrel{s_1}{\longrightarrow} i_1\stackrel{s_2}{\longrightarrow} \cdots \stackrel{s_\ell}{\longrightarrow} i_\ell),
\]
where $\h(\vec s_j)=i_j$ and $\t(\vec s_j)=i_{j-1}$ for all $j=1,\dots,\ell$. We say that a walk is \emph{ordered} if the order of the steps is consistent with the ordering of edges: $(s_1,\dots,s_\ell)\preceq E$.

Given a graph $\gamma\in \Gamma_{m}(r,s,n)$ and a vertex $i\in[n]$, there is a natural ordered walk $w_i=w_i(\gamma)$ that starts at vertex $i$ and sequentially walks along the edges $e_1,e_2,\dots,e_m$. More specifically, starting at vertex $i=i_0$, let $s_1=\{i_0,i_1\}$ be the first edge in $E$ that contains $i_0$, and define $\vec s_1=(i_0,i_1)$. Next, let $s_2=\{i_1,i_2\}$ be the first edge in $E$ that occurs after $s_1$ and contains $i_1$, and define $\vec s_1=(i_1,i_2)$. Continue in this way until, for some $i_\ell$, there does not exist an edge in $E$ that occurs after $s_\ell$ and contains $i_\ell$. When this happens, stop the recursion, and define
\[
w_i(\gamma)=(i_0\stackrel{s_1}{\longrightarrow} i_1\stackrel{s_2}{\longrightarrow} \cdots \stackrel{s_\ell}{\longrightarrow} i_\ell).
\]
If $i$ is not contained in any edges, then $w_i$ is the trivial walk that starts and ends at $i$ and does not have any steps.

\begin{example}\label{example:walks}
Consider the graph in Example \ref{runningexample}. By starting at each vertex and following the edges sequentially, the four ordered edge walks we obtain are:
\begin{align*}
w_1&=(1\stackrel{e_4}{\longrightarrow} 2);\\
w_2&=(2\stackrel{e_2}{\longrightarrow} 3\stackrel{e_5}{\longrightarrow} 4);\\
w_3&=(3\stackrel{e_1}{\longrightarrow} 4\stackrel{e_3}{\longrightarrow} 4\stackrel{e_5}{\longrightarrow} 3\stackrel{e_6}{\longrightarrow} 1\stackrel{e_7}{\longrightarrow} 1);\\
w_4&=(4\stackrel{e_1}{\longrightarrow} 3\stackrel{e_2}{\longrightarrow} 2\stackrel{e_4}{\longrightarrow} 1\stackrel{e_6}{\longrightarrow} 3).
\end{align*}
\end{example}

The next result is a useful observation about these ordered edge walks. It can be checked explicitly in Example \ref{example:walks}.

\begin{proposition}\label{prop:steps}
Let $\gamma\in\Gamma_m(r,s,n)$. If $\vec e$ is a directed edge in $\gamma$, then there is a unique $i$ such that $\vec e$ is a step on the walk $w_i(\gamma)$. Consequently,
\begin{enumerate}
\item every $e\in E_1$ occurs as a step on exactly two walks $w_i(\gamma)$ and $w_j(\gamma)$ with $i\neq j$, and 
\item every $e\in E_2$ occurs as a step on exactly one walk $w_i(\gamma)$.
\end{enumerate}
\end{proposition}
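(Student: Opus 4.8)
The plan is to show that the walks $w_1(\gamma),\dots,w_n(\gamma)$ partition the collection of directed edges of $\gamma$, so that every directed edge is a step of exactly one walk. The consequences (1) and (2) then fall out once one records the foundational observation that, by construction, each walk $w_i(\gamma)$ traverses its edges in strictly increasing order of their index in $E=(e_1,\dots,e_m)$: every new step is chosen as the \emph{first} available edge occurring \emph{after} the previous step, so the underlying indices strictly increase along the walk. In particular no index is repeated within a walk, hence a single walk uses any given edge at most once.

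To prove the uniqueness half of the main claim, I would argue that a directed edge determines the entire initial segment of the walk carrying it, up to and including that step. Fix a directed edge $\vec e=(a,b)$ with underlying edge $e_j$; any walk traversing $\vec e$ sits at $a$ immediately before this step, so $\t(\vec e)=a$. There are two cases for how the walk reached $a$. Either $e_j$ is the first edge of $E$ containing $a$, in which case the walk must have \emph{started} at $a$, forcing $i=a$; or there is an earlier edge containing $a$, and then the step preceding $\vec e$ is forced to be $e_{j'}$, where $e_{j'}$ is the \emph{last} edge before $e_j$ containing $a$, directed so that its head is $a$. Either way the predecessor step (or its absence) is uniquely determined by $\vec e$. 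Iterating backward---each time passing to the last earlier edge containing the current tail vertex, with the direction forced by the requirement that its head be that tail---produces a chain of directed edges with strictly decreasing underlying indices, which therefore terminates at a unique source vertex $i$. Consequently, any walk through $\vec e$ must start at this particular $i$, and since there is exactly one walk per start vertex, at most one walk contains $\vec e$.

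For existence I would verify that this backward reconstruction genuinely inverts the forward construction of $w_i$: starting $w_i$ at the source vertex $i$ produced above, the ``first edge containing the current vertex, occurring after the previous step'' rule reproduces exactly the reconstructed steps in reverse, so $w_i$ really does pass through $\vec e$. This matching is the step requiring the most care---one must check that the ``last edge before $e_j$ containing $a$'' used going backward is precisely the ``first edge after $e_{j'}$ containing $a$'' seen going forward, and track directions correctly (including self-edges in $E_2$, which admit only a single direction). Granting the main claim, the consequences are immediate: an edge $e=\{i,j\}\in E_1$ with $i\neq j$ has two directed versions $(i,j)$ and $(j,i)$, each on a unique walk, and these walks must be distinct because a single walk uses $e$ at most once, so $e$ is a step of exactly two walks; a self-edge $e\in E_2$ has a single directed version lying on exactly one walk, giving (2).
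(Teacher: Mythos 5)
Your proof is correct and follows essentially the same route as the paper's: both reconstruct the walk backwards from $\vec e$ by repeatedly passing to the last earlier edge containing the current tail vertex (with forced direction), which simultaneously pins down the unique admissible start vertex $i$ (uniqueness) and, via the matching of ``last edge before'' with ``first edge after,'' exhibits $\vec e$ as a step of $w_i$ (existence). Your explicit observation that indices strictly increase along a walk---so the two directions of an edge in $E_1$ cannot lie on the same walk---is a point the paper leaves implicit in deducing consequence (1), and it is a worthwhile clarification.
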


\begin{proof}
Given a graph $\gamma\in\Gamma_m(r,s,n)$ and a directed edge $\vec e$, let's walk backwards to find an $i$ such that $\vec e$ is a step on $w_i$. More specifically, let $s_{\text{-}1}=\{i_{\text{-}1},i_0\}$ be the last edge before $e$ that contains $i_0=\t(\vec e)$, and define $\vec s_{\text{-}1}=(i_{\text{-}1},i_0)$. Then let $s_{\text{-}2}=\{i_{\text{-}2},i_{\text{-}1}\}$ be the last edge before $s_{\text{-}1}$ that contains $i_{\text{-}1}$, and define $\vec s_2=(i_{\text{-}2},i_{\text{-}1})$. Continue in this way until, for some $i_{\text{-}\ell}$, there does not exist an edge before $s_{\text{-}\ell}$ that contains $i_{\text{-}\ell}$, and define $i=i_{\text{-}\ell}$. By construction, the walk $w_i$ is equal to
\[
w_i=(i_{\text{-}\ell}\stackrel{s_{\text{-}\ell}}{\longrightarrow} \cdots \stackrel{s_{\text{-}1}}{\longrightarrow} \t(\vec e)\stackrel{e}{\longrightarrow} \h(\vec e)\stackrel{s_1}{\longrightarrow} \cdots \stackrel{s_{\ell'}}{\longrightarrow} i_{\ell'})
\]
for some directed edges $\vec s_1,\dots,\vec s_{\ell'}$. Thus, $\vec e$ is a step on the walk $w_i$. 

To see that $\vec e$ cannot be a step on more than one walk $w_i$, it is enough to notice that, given a step on a walk $w_i$, both the preceding and the following step are uniquely determined. Thus, if two walks $w_i$ and $w_j$ share one step, then they must share all of their steps.
\end{proof}

So far, the ordered walks $w_i$ only take into account the ordering of the edges, but not the labels. We now take into consideration the edge labels. We define the \emph{weight} of a directed edge by
\[
\k(\vec e)=\begin{cases}
\k(e) & \text{ if } \h(\vec e)> \t(\vec e),\\
s\k(e) & \text{ if } \h(\vec e) = \t(\vec e),\\
-\k(e) & \text{ if } \h(\vec e) < \t(\vec e),
\end{cases}
\] 
where the three cases are referred to as \emph{up-steps}, \emph{loops}, and \emph{down-steps}, respectively. The \emph{weight} of a walk $w=(\vec s_1,\dots,\vec s_\ell)$ is defined by
\[
\k(w)=\sum_{j=1}^\ell \k(\vec s_j).
\]

\begin{example}\label{example:walklabels}
Consider the decorated graph in Example \ref{runningexample} with walks described in Example \ref{example:walks}. We compute the weights of these walks:
\begin{align*}
\k(w_1)&=1;\\
\k(w_2)&=0+3=3;\\
\k(w_3)&=5+2\cdot 2-3-4+2\cdot 1=4;\\
\k(w_4)&=-5-0-1+4=-2.
\end{align*}
\end{example}

Given the above definitions, we are now ready to characterize those graphs in $\Gamma_m(r,s,n)$ that correspond to factorizations of $\omega\in G(r,s,n)$. This is the content of the next result. 

\begin{proposition}\label{prop:graphs}
Let $\omega$ be an element of $G(r,s,n)$ and let $\gamma\in\Gamma_m(r,s,n)$ be a decorated graph. Then $\gamma\in \Gamma_m^\omega$ if and only if
\[
\omega(\v_i)=\zeta_r^{\k(w_i)}\v_{\h(w_i)}\;\;\text{ for all }\;\;i=1,\dots,n,
\]
where $\v_1,\dots,\v_n$ are the standard basis vectors of $\C^n$.
\end{proposition}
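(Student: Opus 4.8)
The plan is to reduce the statement to the observation that two elements of $G(r,s,n)\subset\mathrm{GL}(\C^n)$ coincide if and only if they agree on every standard basis vector, together with a step-by-step computation of how the product $\rho_m\cdots\rho_1$ acts on each $\v_i$. Writing $(\rho_1,\dots,\rho_m)$ for the $m$-tuple of reflections corresponding to $\gamma$ under \eqref{eq:bijection} and $\omega_\gamma:=\rho_m\cdots\rho_1$ for their product, the definition of $\Gamma_m^\omega$ says precisely that $\gamma\in\Gamma_m^\omega$ if and only if $\omega_\gamma=\omega$, i.e. if and only if $\omega(\v_i)=\omega_\gamma(\v_i)$ for all $i$. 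Thus it suffices to prove the identity $\omega_\gamma(\v_i)=\zeta_r^{\k(w_i)}\v_{\h(w_i)}$ for every graph $\gamma$ and every vertex $i$; the proposition then follows immediately.

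First I would record the action of a single reflection on basis vectors. Using the formulas of Example \ref{ex:reflections} and unwinding the weight conventions, one checks directly that the reflection $\rho$ associated to an edge $e$ fixes $\v_l$ whenever $l$ is not a vertex of $e$, and that for each directed version $\vec e$ of $e$ one has
\[
\rho(\v_{\t(\vec e)})=\zeta_r^{\k(\vec e)}\,\v_{\h(\vec e)}.
\]
Indeed, for $e=\{i,j\}\in E_1$ with $i<j$ the two directions recover $\sigma_{ij}^{\k(e)/r}(\v_i)=\zeta_r^{\k(e)}\v_j$ (an up-step) and $\sigma_{ij}^{\k(e)/r}(\v_j)=\zeta_r^{-\k(e)}\v_i$ (a down-step), while for a loop $e=\{i,i\}\in E_2$ the unique direction recovers $\tau_i^{s\k(e)/r}(\v_i)=\zeta_r^{s\k(e)}\v_i$.

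With this in hand I would compute $\omega_\gamma(\v_i)$ by induction on the number of reflections applied, threading the basis vector through $\rho_1,\rho_2,\dots,\rho_m$ (recall $\rho_1$ acts first). For $0\le t\le m$, let $w_i^{(t)}$ denote the walk obtained by running the greedy edge-walk construction using only the first $t$ edges $e_1,\dots,e_t$, and set $v_t=\h(w_i^{(t)})$ and $c_t=\k(w_i^{(t)})$. The inductive claim is that
\[
\rho_t\cdots\rho_1(\v_i)=\zeta_r^{c_t}\,\v_{v_t},
\]
with the base case $t=0$ holding because $w_i^{(0)}$ is the trivial walk at $i$. For the inductive step, apply $\rho_t$ (the reflection of $e_t$) to $\zeta_r^{c_{t-1}}\v_{v_{t-1}}$. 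If $v_{t-1}$ is not a vertex of $e_t$, then $\rho_t$ fixes $\v_{v_{t-1}}$ and the walk does not extend, so $(v_t,c_t)=(v_{t-1},c_{t-1})$. If $v_{t-1}$ is a vertex of $e_t$, then, since no edge among $e_1,\dots,e_{t-1}$ lying after the last step of $w_i^{(t-1)}$ contains $v_{t-1}$, the edge $e_t$ becomes the next step of the walk, directed out of $v_{t-1}$; the single-reflection formula then multiplies by $\zeta_r^{\k(\vec e_t)}$ and moves the active vertex to $\h(\vec e_t)=v_t$, in agreement with $c_t=c_{t-1}+\k(\vec e_t)$. Taking $t=m$ and noting $w_i^{(m)}=w_i$ yields $\omega_\gamma(\v_i)=\zeta_r^{\k(w_i)}\v_{\h(w_i)}$.

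I expect the only delicate point to be the verification in the inductive step that the greedy walk $w_i^{(t)}$ extends by exactly the edge $e_t$ precisely when $e_t$ meets the current endpoint $v_{t-1}$; this is where the definition of $w_i$ as a greedy walk following the edge ordering is used, and it is the hinge that synchronizes the algebraic action of the reflections with the combinatorics of the walk. Once this synchronization is established, the desired equivalence follows formally from the basis-vector criterion for equality in $\mathrm{GL}(\C^n)$ described in the first paragraph.
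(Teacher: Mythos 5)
Your proof is correct and follows essentially the same route as the paper: both reduce the statement to checking the action on standard basis vectors, record the single-reflection formula $\rho(\v_{\t(\vec e)})=\zeta_r^{\k(\vec e)}\v_{\h(\vec e)}$, and then thread $\v_i$ through $\rho_1,\dots,\rho_m$ while matching the nontrivial actions with the steps of the greedy walk $w_i$. Your induction on prefixes, with the truncated walks $w_i^{(t)}$, is simply a more explicit formalization of the paper's ``continuing in this way'' step, and the synchronization claim you flag as the delicate point is exactly the one the paper also relies on.
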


Before proving the proposition, we return one last time to our running example.

\begin{example}
Using the values of $\h(w_i)$ and $\k(w_i)$ computed in Examples \ref{example:walks} and \ref{example:walklabels}, we see that the decorated graph in $\Gamma_{5,2}(6,2,4)$ depicted in Example \ref{runningexample} corresponds to a length-$7$ factorization of
\[
\left(
\begin{array}{cccc}
0 & 0 & \zeta_6^4 & 0\\
\zeta_6 & 0 & 0 & 0\\
0 & 0 & 0 & \zeta_6^4\\
0 & \zeta_6^3 & 0 & 0\\
\end{array}
\right)\in G(6,2,4).
\]
This can be checked explicitly by multiplying the $7$ elements listed in Example \ref{example:tupleofreflections}.
\end{example}
\vspace{0bp}

\begin{proof}[Proof of Proposition \ref{prop:graphs}]
Let $\gamma\in\Gamma_m(r,s,n)$ be a decorated graph and consider the associated reflections $(\rho_1,\dots,\rho_m)$ defined in \eqref{eq:bijection1}. By definition, $\gamma\in \Gamma_m^\omega$ if and only if $\rho_m\cdots\rho_1=\omega$, and this equality can be verified by checking that both sides act the same way on the standard basis vectors. Thus, we must prove that
\[
\rho_m\cdots\rho_1(\v_i)=\zeta_r^{\k(w_i)}\v_{\h(w_i)}\;\;\text{ for all }\;\;i=1,\dots,n.
\]

Fix $i_0\in[n]$. In order to compute $\rho_m\cdots\rho_1(\v_{i_0})$, we begin by looking for the first reflection $r_1\in(\rho_1,\dots,\rho_l)$ of the form $r_1=\sigma_{i_0i_1}^{k/r}$, $r_1=\tau_{i_0}^{sk/r}$, or $r_1=\sigma_{i_1i_0}^{k/r}$. All other reflections fix $\v_{i_0}$, so they can be ignored for the purposes of this calculation. Notice that the edge $s_1$ in $\gamma$ corresponding to the reflection $r_1$ is the first step in the walk $w_{i_0}=(\vec s_1,\dots,\vec s_\ell)$, and the three possibilities for $r_1$ characterize whether $\vec s_1$ is an up-step, a loop, or a down-step. In each case, we compute
\begin{align*}
r_1(\v_{i_0})&=\begin{cases}
\zeta_r^k\v_{i_1} &\text{ if } r_1=\sigma_{i_0i_1}^{k/r}\\
\zeta_r^{sk}\v_{i_0} & \text{ if } r_1=\tau_{i_0}^{sk/r}\\
\zeta_r^{-k}\v_{i_1} & \text{ if }r_1=\sigma_{i_1i_0}^{k/r}
\end{cases}\\
&=\zeta_r^{\k(\vec s_1)}\v_{\h(\vec s_1)}.
\end{align*}
Next, look for the first reflection $r_2\in(\rho_1,\dots,\rho_\ell)$ occurring after $r_1$ of the form $r_2=\sigma_{\h(\vec s_1)i_2}^{k/r}$, $r_2=\tau_{\h(\vec s_1)}^{sk/r}$, or $r_2=\sigma_{i_2\h(\vec s_1)}^{k/r}$. By the same argument as above,
\begin{align*}
r_2(r_1(\v_{i_0}))&=r_2(\zeta_r^{\k(\vec s_1)}\v_{\h(\vec s_1)})\\
&=\zeta_r^{\k(\vec s_1)+\k(\vec s_2)}\v_{\h(\vec s_2)}.
\end{align*}
Continuing in this way, we construct a list of elements $(r_1,\dots,r_l)\preceq(\rho_1,\dots,\rho_m)$ such that
\begin{align*}
\rho_m\cdots\rho_1(\v_{i_0})&=r_\ell\cdots r_1(\v_{i_0})\\
&=\zeta_r^{\k(\vec s_1)+\cdots+\k(\vec s_\ell)}\v_{\h(\vec s_\ell)}\\
&=\zeta_r^{\k(w_{i_0})}\v_{\h(w_{i_0})}.
\end{align*}
\vspace{-30bp}

\end{proof}

\subsection{Connected factorizations}

Our main results in Section \ref{sec:results} are stated in terms of connected factorizations. In this subsection, we introduce connected factorizations and describe how they generalize the transitive factorizations in $S_n$. We also prove that every factorization number can be computed from the connected factorization numbers.

Let $\omega\in G(r,s,n)$. We say that a factorization 
\[
(\rho_1,\dots,\rho_m)\in F^\omega_m
\]
is \emph{connected} if the corresponding graph $\gamma\in\Gamma_m^\omega$ is connected, in the sense that there exists at least one walk between any two vertices. The following proposition shows how the notion of connected factorizations in $G(r,s,n)$ generalizes that of transitive factorization in $S_n$.\footnote{In \cite{LewisMorales}, Lewis and Morales studied a notion of transitive factorizations for $G(r,s,n)$ that also generalizes the notion of transitive factorizations in $S_n$. Their notion of transitive factorizations is more restrictive than the notion of connected factorizations studied here. On the other hand, the notion of connected factorizations studied here seems to agree with the notion of near admissible in work of Bini, Goulden, and Jackson on factorizations in hyperoctahedral groups \cite{BGJ}.}

\begin{proposition}
Let $\omega\in G(r,s,n)$. A factorization $(\rho_1,\dots,\rho_m)\in F^\omega_m$ is connected if and only if the subgroup generated by $\pi(\rho_1),\dots,\pi(\rho_m)\in S_n$ acts transitively on the standard basis vectors $\v_1,\dots,\v_n$.
\end{proposition}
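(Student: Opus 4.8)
The plan is to show that both conditions in the statement are equivalent to a single purely graph-theoretic condition—namely, the connectivity of the undirected multigraph on $[n]$ whose edges are the non-loop edges $E_1$ of the graph $\gamma$ associated to the factorization. The two implications then follow by matching this condition to each side of the equivalence separately.

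First I would record the effect of $\pi$ on the reflections. Using the definition of $\pi$ (which replaces each nonzero matrix entry by $1$) together with the explicit forms in Example \ref{ex:reflections}, each reflection $\rho_j$ corresponding to an edge $e_j=\{i,j\}\in E_1$ satisfies $\pi(\rho_j)=(i\,j)\in S_n$, while each $\rho_j$ corresponding to a self-edge $e_j\in E_2$ satisfies $\pi(\rho_j)=\mathrm{id}$, since $\tau_i^{sk/r}$ is diagonal and hence maps to the identity permutation matrix. Consequently, the subgroup $H=\langle\pi(\rho_1),\dots,\pi(\rho_m)\rangle$ is exactly the subgroup generated by the transpositions $\{(i\,j)\;:\;\{i,j\}\in E_1\}$, and the self-edges in $E_2$ contribute nothing to $H$.

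Next I would treat the graph side. Because a self-edge has equal head and tail, traversing it never changes the current vertex, while every non-loop edge in $E_1$ admits both directions in a walk. Hence the existence of a walk between two distinct vertices $i$ and $j$ of $\gamma$ is equivalent to $i$ and $j$ lying in the same connected component of the undirected multigraph $(V,E_1)$; in particular, $\gamma$ is connected in the sense of the definition exactly when $(V,E_1)$ is connected. On the group side, I would invoke the standard correspondence between orbits of a transposition-generated subgroup and connected components: for a set $T$ of transpositions, two elements of $[n]$ lie in the same $H$-orbit if and only if they lie in the same connected component of the graph with edge set $T$. Indeed, a path of transpositions yields a permutation carrying one endpoint to the other, and conversely each generator $(i\,j)$ fixes every vertex outside the component of $i$, so no product of generators can move a vertex out of its component. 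Applying this with $T=\{(i\,j):\{i,j\}\in E_1\}$, and recalling that $S_n$ acts on the basis vectors $\v_1,\dots,\v_n$ via its natural permutation action on $[n]$, we conclude that $H$ acts transitively if and only if $(V,E_1)$ is connected. Combining the two characterizations gives the desired equivalence.

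The main obstacle is not one of difficulty but of care: one must confirm that the self-edges $E_2$ are simultaneously irrelevant on both sides, mapping to the identity under $\pi$ and being unable to connect distinct vertices, so that both conditions genuinely collapse to connectivity of the same graph $(V,E_1)$. I would double-check the boundary case of a vertex incident only to loops, which is isolated in $(V,E_1)$ and forms a singleton $H$-orbit, verifying that the two notions still agree there.
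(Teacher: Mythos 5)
Your proof is correct, but it takes a genuinely different route from the paper's. The paper proves both directions by passing through Proposition \ref{prop:graphs}: for the forward direction it extracts the sub-tuple of reflections corresponding to the steps of a walk from $i$ to $j$, forms the product $\omega'$, and uses the ordered-edge-walk characterization to conclude $\omega'(\v_i)=\zeta_r^k\v_j$, hence $\pi(\omega')(\v_i)=\v_j$; for the converse it takes a sub-tuple whose $\pi$-images carry $\v_i$ to $\v_j$, applies Proposition \ref{prop:graphs} to the associated subgraph $\gamma'$ to produce an ordered walk from $i$ to $j$ in $\gamma'$, and observes that this gives a (not necessarily ordered) walk in $\gamma$. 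You instead bypass that machinery entirely: you note that $\pi$ sends the reflections attached to $E_1$-edges to the corresponding transpositions and the reflections attached to self-edges to the identity, reduce both sides of the equivalence to connectivity of the undirected multigraph $(V,E_1)$, and invoke the classical fact that the orbits of a subgroup of $S_n$ generated by a set of transpositions are exactly the connected components of the associated graph. Your argument is more elementary and self-contained--it needs neither the weight bookkeeping nor the ordered-walk formalism--and it cleanly isolates the one genuinely relevant observation, namely that loops are invisible on both sides (mapping to the identity under $\pi$ and never joining distinct vertices). What the paper's approach buys in exchange is economy within its own framework: having already established Proposition \ref{prop:graphs} for the main comparison theorem, it reuses that result rather than importing a separate combinatorial lemma, and the walk-based argument keeps the same formalism that reappears in the proof of Theorem \ref{theorem:comparison}. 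Both proofs are complete; yours would stand on its own with only the dictionary between reflections and edges.
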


\begin{proof}
Let $(\rho_1,\dots,\rho_m)\in F^\omega_m$ be a factorization with associated graph $\gamma\in\Gamma^\omega_m$.

Suppose that $\gamma$ is connected. To prove that the subgroup generated by $\{\pi(\rho_1),\dots,\pi(\rho_m)\}$ acts transitively on $\{\v_1,\dots,\v_n\}$, let $i,j\in[n]$ and, by connectedness, choose a walk between $i$ and $j$:
\begin{equation}\label{eq:walk}
w=(i=i_0\stackrel{s_1}{\longrightarrow} i_1\stackrel{s_2}{\longrightarrow} \cdots \stackrel{s_\ell}{\longrightarrow} i_\ell=j).
\end{equation}
Let $\gamma'\in\Gamma_\ell(r,s,n)$ be the graph with ordered edge set $(s_1,\dots,s_\ell)$ and notice that $w_i(\gamma')=w$, the walk described in \eqref{eq:walk}. Let $\{r_1,\dots,r_\ell\}\subseteq\{\rho_1,\dots,\rho_m\}$ be the subset of reflections corresponding to the edges $\{s_1,\dots,s_\ell\}$, and set $\omega'=r_\ell\cdots r_1$. By Proposition \ref{prop:graphs},
\[
\omega'(\v_i)=\zeta_r^k\v_j
\]
for some $k$, implying that
\[
\left(\pi(r_\ell)\cdots\pi(r_1)\right)(\v_i)=\pi(\omega'(\v_i))=\v_j.
\]
Thus, the subgroup generated by $\pi(\rho_1),\dots,\pi(\rho_m)$ acts transitively on $\{\v_1,\dots,\v_n\}$.

Conversely, assume that the subgroup generated by $\{\pi(\rho_1),\dots,\pi(\rho_m)\}\subseteq S_n$ acts transitively on $\{\v_1,\dots,\v_n\}$. Given $i,j\in[n]$, there is a subset $\{r_1,\dots,r_\ell\}\subseteq\{\rho_1,\dots,\rho_m\}$ such that
\begin{equation}\label{eq:walksnstuff}
\left(\pi(r_\ell)\cdots\pi(r_1)\right)(\v_i)=\v_j.
\end{equation}
Set $\omega'=r_\ell\cdots r_1$ and let $\gamma'\in\Gamma_\ell^{\omega'}$ be the graph associated to $(r_1,\dots,r_\ell)\in F^{\omega'}_\ell$. In order for \eqref{eq:walksnstuff} to be true, it must be the case that $\omega'(\v_i)=\zeta_r^k\v_j$ for some $k$. Therefore, by Proposition \ref{prop:graphs}, the ordered walk $w_i(\gamma')$ starts at vertex $i$ and ends at vertex $j$. By definition, the edges of $\gamma'$ are a subset of the edges in $\gamma$, so the ordered walk  $w_i(\gamma')$ corresponds to a (not-necessarily ordered) walk from $i$ to $j$ in the graph $\gamma$. Since $i$ and $j$ were arbitrary, this proves that $\gamma$ is a connected graph.
\end{proof}

We denote the sets of connected graphs by $\widetilde\Gamma_{m_1,m_2}\subseteq\widetilde \Gamma_m^\omega\subseteq \Gamma_m^\omega$, and  
we define the connected factorization numbers by
\[
\widetilde f_m^\omega:=|\widetilde \Gamma_m^\omega|\;\;\text{ and }\;\;\widetilde f_{m_1,m_2}^\omega:=|\widetilde \Gamma_{m_1,m_2}^\omega|.
\]

Since every graph decomposes uniquely as a disjoint union of connected graphs, every factorization number can be computed in terms of connected factorization numbers. To make this precise, we require a little more notation. Given a subset $I\subseteq [n]$, let $G(r,s,I)$ be the subset of $G(r,s,n)$ that fixes all standard basis vectors aside from those indexed by the elements of $I$. Given an element $\omega\in G(r,s,n)$, a \emph{partition} of $\omega$ consists of a set partition $[n]=I_1\sqcup\cdots\sqcup I_\ell$ along with elements $w_1\in G(r,s,I_1),\dots,w_\ell\in G(r,s,I_\ell)$ such that $\omega_1\cdots\omega_\ell=\omega.$ If $r=1$, then a partition of the permutation $\omega$ simply consists of all ways to group together its disjoint cycles. Let $\P(\omega)$ denote the set of partitions of $\omega$. The next result describes how to compute general factorization numbers from connected factorization numbers.

\begin{proposition}\label{prop:connected}
If $\omega\in G(r,s,n)$ and $m\in\Z_{\geq 0}$, then 
\[
f_m^\omega=\sum_{\substack{(\omega_1,\dots,\omega_\ell)\in\P(\omega) \\ m_1+\cdots+m_\ell=m}}{m \choose m_1,\dots,m_\ell}\widetilde f_{m_1}^{\omega_1}\cdots\widetilde f_{m_\ell}^{\omega_\ell}.
\]
\end{proposition}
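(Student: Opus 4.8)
The plan is to establish a bijection between the factorization set $F_m^\omega$ and a set of tuples consisting of a partition $(\omega_1,\dots,\omega_\ell)\in\P(\omega)$, a choice of where each of the $m$ ordered reflections lands among the $\ell$ connected pieces, and a connected factorization of each $\omega_j$. The organizing principle is that every graph $\gamma\in\Gamma_m^\omega$ decomposes uniquely into its connected components, and this decomposition is exactly what the right-hand side is summing over.

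First I would take an arbitrary graph $\gamma\in\Gamma_m^\omega$ with ordered edge set $E=(e_1,\dots,e_m)$ and decompose its vertex set $[n]$ into the connected components $I_1,\dots,I_\ell$ of the underlying graph, using isolated vertices (those in no edge) as singleton components. This yields a set partition $[n]=I_1\sqcup\cdots\sqcup I_\ell$. Each edge lies entirely within a single component, so the ordered edge set $E$ restricts to an ordered edge set $E^{(j)}\preceq E$ on each $I_j$, with the induced ordering; say $|E^{(j)}|=m_j$, so that $m_1+\cdots+m_\ell=m$. By Proposition \ref{prop:graphs}, the reflections associated to $E^{(j)}$ multiply to some element $\omega_j\in G(r,s,I_j)$, since those reflections move only the basis vectors $\v_i$ with $i\in I_j$. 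The key point is that $\omega_1\cdots\omega_\ell=\omega$: the components act on disjoint sets of coordinates, so the $\omega_j$ commute, and applying Proposition \ref{prop:graphs} to each walk $w_i(\gamma)$ shows that the action of $\omega$ on $\v_i$ agrees with that of the single factor $\omega_j$ for which $i\in I_j$. Thus $(\omega_1,\dots,\omega_\ell)\in\P(\omega)$, and by construction each restricted graph $\gamma^{(j)}\in\widetilde\Gamma_{m_j}^{\omega_j}$ is connected.

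Next I would check that this decomposition map is a bijection onto the indexing set of the sum. To recover $\gamma$ from a partition $(\omega_1,\dots,\omega_\ell)$, connected factorizations $\gamma^{(j)}\in\widetilde\Gamma_{m_j}^{\omega_j}$, and a way of interleaving the edges, one specifies which of the $m$ ordered positions are occupied by the $m_j$ edges of each $\gamma^{(j)}$ — preserving the internal order of each block — and then reads off the labeled edges. The number of such interleavings is precisely the multinomial coefficient $\binom{m}{m_1,\dots,m_\ell}$, since choosing an ordered edge set on $[n]$ that restricts to the given ordered sets on each $I_j$ amounts to choosing which positions in $(e_1,\dots,e_m)$ belong to each block. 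Summing $\binom{m}{m_1,\dots,m_\ell}\widetilde f_{m_1}^{\omega_1}\cdots\widetilde f_{m_\ell}^{\omega_\ell}$ over all partitions and all compositions $m_1+\cdots+m_\ell=m$ therefore counts $|\Gamma_m^\omega|=f_m^\omega$, which is the claimed identity.

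The main obstacle I anticipate is bookkeeping the uniqueness of the decomposition cleanly, rather than any deep difficulty. Specifically, one must confirm that the set partition into connected components, together with the choice of component-elements $\omega_j$, is genuinely well-defined and that the reconstruction is its exact inverse; the subtlety is that the \emph{same} underlying partition $[n]=I_1\sqcup\cdots\sqcup I_\ell$ may arise from many different $\gamma$, so the bijection must be phrased as a decomposition over all of $\P(\omega)$ with the connected factorizations and interleavings as the varying data. Care is also needed because $G(r,s,I_j)$ is only a \emph{subset} (not a subgroup) of $G(r,s,n)$, so one should verify that the product $\omega_j$ of reflections supported on $I_j$ indeed lies in $G(r,s,I_j)$ and that the factors commute — both of which follow immediately from the disjointness of the coordinate supports.
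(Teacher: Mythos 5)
Your proposal is correct and takes essentially the same approach as the paper: both decompose a graph $\gamma\in\Gamma_m^\omega$ into its connected components, observe that the components yield a partition $(\omega_1,\dots,\omega_\ell)\in\P(\omega)$ together with connected factorizations $\gamma^{(j)}\in\widetilde\Gamma_{m_j}^{\omega_j}$, and account for the multinomial coefficient ${m \choose m_1,\dots,m_\ell}$ as the number of interleavings of the component edge orderings. The only cosmetic difference is that the paper phrases this as a surjection onto $\bigsqcup\widetilde\Gamma_{m_1}^{\omega_1}\times\cdots\times\widetilde\Gamma_{m_\ell}^{\omega_\ell}$ with fibers of multinomial size, whereas you package the interleaving into the data to get a bijection; these are the same count.
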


\begin{proof}
Notice that every graph $\gamma\in \Gamma_m^\omega$ can be decomposed uniquely as a disjoint union of connected graphs $\gamma_1,\dots,\gamma_\ell$. If $\gamma_i$ has vertices $I_i\subseteq[n]$ and has $m_i$ edges, then $\gamma_i\in\widetilde\Gamma_{m_i}^{\omega_i}$ for some $\omega_i\in G(r,s,I_i)$. In addition, the vertex sets $\{I_i\}$ form a set partition of $[n]$ and the integers $\{m_i\}$ add up to $m$. Thus, there is a function
\[
\Gamma_m^\omega\rightarrow\bigsqcup_{\substack{(\omega_1,\dots,\omega_\ell)\in\P(\omega) \\ m_1+\cdots+m_\ell=m}}\widetilde\Gamma_{m_1}^{\omega_1}\times\cdots\times \widetilde\Gamma_{m_\ell}^{\omega_\ell}
\]
This function is surjective, but not injective. The number of graphs in the preimage of $(\gamma_1,\dots,\gamma_\ell)$ corresponds to the number of ways of choosing an ordering of all of the $m$ edges that is consistent with the ordering of the $m_i$ edges in each connected component $\gamma_i$. The number of ways of choosing such an ordering is counted by the multinomial
\[
{m \choose m_1,\dots,m_\ell},
\]
proving the formula in the proposition.
\end{proof}

\begin{remark}
While Proposition~\ref{prop:connected} shows that connected factorizations determine all factorizations in principle, it is quite difficult to implement this reconstruction in practice due to the complexity of computing the set $\P(\omega)$.
\end{remark}

\section{Comparison formula and polynomiality}\label{sec:results}

In this section, we prove the main comparison formula between connected factorizations in $G(r,s,n)$ and connected factorizations in $S_n$ (Theorem \ref{theorem:comparison}). We then use the comparison formula to prove polynomiality of factorizations in $G(r,s,n)$ (Theorem \ref{theorem:mainresult}). We close this section by reinterpreting the comparison formula in terms of exponential generating series (Corollary \ref{corollary:series}), then using this to compute the factorization series of all long cycles.

\subsection{Comparison formula}\label{subsec:comparison}

The next result, which is the technical heart of this paper,  utilizes the homomorphisms $\pi:G(r,s,n)\rightarrow S_n$ and $\phi:G(r,s,n)\rightarrow \mu_{r/s}$ and the function $\delta:G(r,s,n)\rightarrow\{0,1\}$ to describe an explicit comparison between the connected factorization numbers $\widetilde f_{m_1,m_2}^{\omega}$ associated to $\omega\in G(r,s,n)$ and the connected factorization numbers $\widetilde f_{m_1}^{\pi(\omega)}$ associated to $\pi(\omega)\in S_n$. 

\begin{theorem}\label{theorem:comparison}
For any $\omega\in G(r,s,n)$,
\begin{equation}\label{eq:comparison1}
\widetilde f_{m_1,m_2}^{\omega}=\left(r^{m_1-n+1}n^{m_2}{m_1+m_2 \choose m_1} f_{m_2}^{\phi(\omega)}\right)\widetilde f_{m_1}^{\pi(\omega)},
\end{equation}
where
\begin{equation}\label{eq:comparison2}
f_{m_2}^{\phi(\omega)}=\frac{1}{r/s}\left((r/s-1)^{m_2}-(-1)^{m_2}\right)+\delta(\omega)(-1)^{m_2}.
\end{equation}
\end{theorem}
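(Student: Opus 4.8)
The plan is to compute the fibers of the map $\pi:\widetilde\Gamma^{\omega}_{m_1,m_2}\to\widetilde\Gamma^{\pi(\omega)}_{m_1}$ induced by \eqref{eq:map}. First I would note that the connectivity of a graph $\gamma$ depends only on its $E_1$-edges (self-loops and labels never affect whether two vertices are joined by a walk), so $\gamma$ is connected if and only if its image $\overline\gamma=\pi(\gamma)$, the underlying $E_1$-graph, is connected. Consequently $\pi$ restricts to connected graphs, every preimage of a connected $\overline\gamma$ is automatically connected, and $\widetilde f^{\omega}_{m_1,m_2}=\sum_{\overline\gamma}|\pi^{-1}(\overline\gamma)|$. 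The entire theorem then reduces to showing that each fiber has size equal to the bracketed factor in \eqref{eq:comparison1}, independently of the choice of $\overline\gamma\in\widetilde\Gamma^{\pi(\omega)}_{m_1}$.

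To count a fiber I would reconstruct a preimage $\gamma$ from $\overline\gamma$ using three independent pieces of data: (i) the interleaving of the $m_2$ self-loops among the $m_1$ transpositions, i.e.\ which of the $m_1+m_2$ ordered slots the loops occupy, contributing $\binom{m_1+m_2}{m_1}$ choices; (ii) the vertex of each self-loop, contributing $n^{m_2}$ choices; and (iii) the edge labels $\k(e)$. By Proposition \ref{prop:graphs}, since $\overline\gamma$ already fixes every $\h(w_i)=\pi(\omega)(i)$, the only remaining constraints on the labels are $\k(w_i)\equiv c_i\pmod r$ for all $i$, where $\zeta_r^{c_i}$ is the nonzero entry in column $i$ of $\omega$. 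Writing $x=(\k(e))_{e\in E_1}$ and $y=(\k(e))_{e\in E_2}$, Proposition \ref{prop:steps} lets me expand $\k(w_i)=L_i(x)+sM_i(y)$, where each $E_1$-label occurs with coefficient $+1$ in exactly one $L_i$ (its up-traversal) and $-1$ in exactly one other (its down-traversal), and each loop label occurs in exactly one $M_i$.

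For a fixed loop placement and fixed $y$, the system $L(x)\equiv c-sM(y)$ over $\Z/r$ has either $0$ or $|\ker L|$ solutions. The key structural observation is that $L$ is the oriented incidence matrix of a graph $H$ on $[n]$, where edge $e$ joins the start-vertices of the two walks traversing it; thus the rows of $L$ sum to $0$ and $\mathrm{im}(L)\subseteq\{v:\sum_i v_i=0\}$. I would prove equality by showing $H$ is connected, which gives $|\mathrm{im}(L)|=r^{n-1}$, hence $|\ker L|=r^{m_1-n+1}$ (valid for the finite abelian group $(\Z/r)^{E_1}$, with no field structure needed), and makes the consistency condition $c-sM(y)\in\mathrm{im}(L)$ reduce to $\sum_i c_i\equiv s\sum_e\k(e)\pmod r$, i.e.\ $\sum_e\k(e)\equiv a\pmod{r/s}$ where $\phi(\omega)=\zeta_{r/s}^{a}$. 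Summing $r^{m_1-n+1}$ over the $n^{m_2}$ placements and the valid $y$ — whose number is by definition the count $f^{\phi(\omega)}_{m_2}$ of length-$m_2$ reflection factorizations of $\phi(\omega)$ in $G(r,s,1)=\mu_{r/s}$ — produces exactly the bracketed factor. Formula \eqref{eq:comparison2} then follows from a routine roots-of-unity filter applied to $\#\{y\in\{1,\dots,r/s-1\}^{m_2}:\sum_e y_e\equiv a\}$, splitting on whether $a\equiv0$, i.e.\ whether $\delta(\omega)=1$.

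The main obstacle is proving $H$ connected, equivalently that $G$ connected forces the ``walk-sharing'' relation on $\{w_1,\dots,w_n\}$ to be connected. I would establish this by analyzing a single vertex $v$: listing the incident edges $g_1<\dots<g_d$ in edge-order, the walk construction sends any walk arriving at $v$ via $g_b$ back out along $g_{b+1}$, while $w_v$ itself departs via $g_1$; since the walk leaving $v$ via $g_b$ and the walk entering $v$ via $g_b$ share the undirected edge $g_b$, all walks meeting $v$ form a single connected chain in $H$. Chaining this along a path in $G$, and using that the two walks traversing an edge $\{u,v\}$ both pass through $u$ and $v$, shows that $G$ connected implies $H$ connected. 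Everything else is bookkeeping of the three independent choices above.
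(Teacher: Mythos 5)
Your proposal is correct, and its outer skeleton is the same as the paper's: both count fibers of the forgetful map $\pi:\widetilde\Gamma^{\omega}_{m_1,m_2}\to\widetilde\Gamma^{\pi(\omega)}_{m_1}$, and both split a fiber into the choice of loop positions, $\binom{m_1+m_2}{m_1}$, loop vertices, $n^{m_2}$, loop labels, $f^{\phi(\omega)}_{m_2}$, and $E_1$-labels, $r^{m_1-n+1}$. The genuine difference is in how the last factor, and the isolation of the single consistency condition, is established. The paper proves Lemma \ref{lemma:choices} --- the existence of edges $f_1,\dots,f_{n-1}$ and a vertex order $v_0,\dots,v_{n-1}$ --- and then back-substitutes: the $m_1-n+1$ remaining labels are free, the constraints at $v_{n-1},\dots,v_1$ determine the labels on $f_{n-1},\dots,f_1$ in turn, and the constraint at $v_0$ holds automatically by \eqref{eq:updown}. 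You instead observe that the constraint system is linear over $\Z/r$ with matrix the signed incidence matrix $L$ of the walk-sharing multigraph $H$ (one $+1$ and one $-1$ per column, in distinct rows, by Proposition \ref{prop:steps}), so that connectivity of $H$ yields $|\mathrm{im}(L)|=r^{n-1}$, hence $|\ker(L)|=r^{m_1-n+1}$ solutions per consistent right-hand side, with consistency being exactly the loop-sum condition; as you note, none of this needs a field structure. Your claim that ``$H$ is connected'' is in fact equivalent to Lemma \ref{lemma:choices} (the $f_\ell$ listed in growth order are precisely a spanning tree of $H$), but your proof of it is genuinely different from, and arguably cleaner than, the paper's: the local analysis showing that the walks meeting a fixed vertex $v$ form a single chain in $H$ (a walk arriving along $g_b$ must depart along $g_{b+1}$, while $w_v$ departs along $g_1$), chained along a path in the underlying graph, replaces the paper's three-case contradiction argument involving the ordered sets $E(i)'$ and $E(i)''$. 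Finally, you derive \eqref{eq:comparison2} by a roots-of-unity filter, whereas Proposition \ref{prop:cyclicfactors} uses the recursion $f^{\kappa}_m=(r-1)^{m-1}-f^{\kappa}_{m-1}$; both are routine. What your route buys is a conceptually cleaner, coordinate-free treatment of the label count (kernel and image of an incidence map over $\Z/r$); what the paper's route buys is a fully elementary argument that stays at the level of explicit walks and sequential substitution.
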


Notice that the term $f_{m_2}^{\phi(\omega)}=|F_{m_2}^{\phi(\omega)}|$ is nothing more than the number of ways to write $\phi(\omega)\in\mu_{r/s}=G(r,s,1)$ as a product of nontrivial elements in the cyclic group. Since every factorization in the cyclic group is connected, we omit the tilde from the notation. 

\subsection{Proof of Theorem \ref{theorem:comparison}}
We begin by proving Equation \eqref{eq:comparison1}. When this is complete, we then turn to a proof of Equation \eqref{eq:comparison2}, which follows from the computation of cyclic factorization numbers in Proposition \ref{prop:cyclicfactors}.

To prove Equation \eqref{eq:comparison1}, let $\omega\in G(r,s,n)$ and consider the function
\[
\pi:\widetilde\Gamma_{m_1,m_2}^\omega\rightarrow \widetilde\Gamma_{m_1}^{\pi(\omega)},
\]
which forgets all self-edges and edge labels. Since 
\[
\widetilde f_{m_1,m_2}^{\omega}=|\widetilde\Gamma_{m_1,m_2}^\omega|\;\;\text{ and }\;\; \widetilde f_{m_1}^{\pi(\omega)}=|\widetilde\Gamma_{m_1}^{\pi(\omega)}|,
\]
it suffices to prove that, for any $\underline\gamma\in \widetilde\Gamma_{m_1}^{\pi(\omega)}$, the size of its preimage under $\pi$ is given by the following formula:
\begin{equation}\label{eq:preimage}
|\pi^{-1}(\underline\gamma)|=r^{m_1-n+1}n^{m_2}{m_1+m_2 \choose m_1} f_{m_2}^{\phi(\omega)}.
\end{equation}

Let $\underline\gamma\in \widetilde\Gamma_{m_1}^{\pi(\omega)}$; we begin by describing the graphs $\gamma\in\pi^{-1}(\underline\gamma)$. Such a graph $\gamma$ has $m_1+m_2$ ordered edges $E$. If we forget the self-edges, then we obtain $m_1$ edges that we denote $(e_1,\dots,e_{m_1})=E_1\preceq E$ corresponding to the $m_1$ ordered edges of $\underline\gamma$. In addition, each edge $e_i$ has a label that we denote $k_i$. If we forget the edges $e_1,\dots,e_{m_1}$, then we obtain $m_2$ ordered self-edges that we denote $(e_1',\dots,e_{m_2}')=E_2\preceq E$, and each self-edge $e_i'$ has a label that we denote $k_i'$. By Proposition \ref{prop:graphs}, the edge labels must satisfy the condition
\begin{equation}\label{eq:condition}
\omega(\mathrm{v}_i)=\zeta_r^{\k(w_i)}\mathrm{v}_{\h(w_i)}
\end{equation}
for all $i=1,\dots,n$, where $w_i=w_i(\gamma)$ is the ordered edge walk starting at vertex $i$. Our task is to count the number of ways to choose such edges and labels subject to the constraint \eqref{eq:condition}. Before working out the counting arguments carefully, we first summarize the three main points that will be proved.
\begin{itemize}
\item The number of ways to choose the edges in $E_2$ along with the ordering $E_2\preceq E$ is
\[
n^{m_2}{m_1+m_2 \choose m_1}.
\]
\item The number of ways to choose labels on the edges in $E_2$ is $f_{m_2}^{\phi(\omega)}$.
\item The number of ways to choose labels on the edges in $E_1$ is $r^{m_1-n+1}$.
\end{itemize}
We now justify each of these counts. In particular, this will prove Equation \eqref{eq:preimage}, and Equation \eqref{eq:comparison1} then follows.

Let's begin by counting the ways to choose $E_2$. Each self-edge in $E_2$ can by attached to any one of the $n$ nodes, which results in $n^{m_2}$ possibilities. In addition, we need to choose an inclusion $E_2\preceq E$, which can be thought of as choosing $m_2$ places in a line-up of $m_1+m_2$ possibilities; this choice is counted by the binomial coefficient ${m_1+m_2 \choose m_1}$. Together, the contribution of these choices to \eqref{eq:preimage} is a factor of 
\[
n^{m_2}{m_1+m_2 \choose m_1}.
\]

Now we count the number of ways to label all of the edges. Since $\phi(\omega)$ is the product of all of the nonzero entries in $\omega$, Equation \eqref{eq:condition} implies that the edge labels must satisfy
\begin{equation}\label{eq:updown}
\zeta_r^{\k(w_1)+\cdots+\k(w_n)}=\phi(\omega).
\end{equation}
Since each edge $e_i\in E_1$ occurs on exactly two of the $n$ ordered walks, once as an up-step and once as a down-step, it contributes $k_i-k_i=0$ to the exponent in \eqref{eq:updown}. On the other hand, each self-edge $e_i'\in E_2$ occurs as a loop on exactly one walk and contributes $sk_i'$ to the exponent. Thus, the condition \eqref{eq:updown} is equivalent to
\begin{equation}\label{eq:looplabels}
\zeta_{r/s}^{k_1'}\cdots\zeta_{r/s}^{k_{m_2}'}=\phi(\omega).
\end{equation}
In other words, the labels $k_i'$ on the self-edges must be chosen subject to the condition \eqref{eq:looplabels}, and the number of ways to do this is precisely counted by $f_{m_2}^{\phi(\omega)}$.

We have now chosen everything except for the labels $k_1,\dots,k_{m_1}$ on the edges in $E_1$, and it remains to prove that, given the above choices, there are exactly $r^{m_1-n+1}$ ways to choose these labels. To accomplish this, we use the following lemma.

\begin{lemma}\label{lemma:choices}
If $\gamma\in \widetilde\Gamma_{m}(r,s,n)$ is a connected graph, then there exists a set of edges $\{f_1,\dots,f_{n-1}\}\subseteq E_1$ and a labeling of the vertices $[n]=\{v_0,\dots,v_{n-1}\}$ such that, for every $\ell=1,\dots,n-1$,
\begin{enumerate}
\item $f_\ell$ is an edge on the walk $w_{v_\ell}$, and
\item there exists $j<\ell$ such that $f_\ell$ is also an edge on the walk $w_{v_j}$.
\end{enumerate}
\end{lemma}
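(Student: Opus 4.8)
The plan is to translate the two conditions into the existence of a suitable rooted spanning tree of an auxiliary graph built from the walks $w_i(\gamma)$, and then to reduce the whole lemma to showing that this auxiliary graph is connected.

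First I would introduce the \emph{walk-adjacency graph} $H$ on the vertex set $[n]$, in which vertices $i$ and $j$ are joined whenever some edge $e\in E_1$ occurs as a step on both of the walks $w_i(\gamma)$ and $w_j(\gamma)$. By Proposition \ref{prop:steps} each $e\in E_1$ is a step on exactly two walks $w_i(\gamma)$ and $w_j(\gamma)$ with $i\neq j$, so each such $e$ determines a single edge of $H$, and conversely every edge of $H$ comes from at least one $e\in E_1$. The point of this reformulation is that conditions (1) and (2) of the lemma are exactly the statement that the $\gamma$-edges $f_1,\dots,f_{n-1}$ underlie the edges of a spanning tree of $H$, where the labeling $v_0,\dots,v_{n-1}$ lists the vertices in a rooted order so that every non-root vertex $v_\ell$ is joined by its tree edge to an earlier vertex $v_j$. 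Consequently, once $H$ is known to be connected, I would finish by choosing any spanning tree, rooting it at a vertex $v_0$, ordering the remaining vertices so that each vertex follows its parent (say by breadth-first search), and taking $f_\ell\in E_1$ to be a $\gamma$-edge underlying the tree edge from $v_\ell$ to its parent; distinct tree edges use distinct underlying $\gamma$-edges, so the $f_\ell$ are distinct.

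The technical heart is therefore to prove that $H$ is connected whenever $\gamma$ is connected, and I would establish the slightly stronger fact that, for every $\gamma\in\Gamma_m(r,s,n)$, the connected components of $H$ coincide as partitions of $[n]$ with the connected components of $\gamma$, by induction on the number of edges $m$. The base case $m=0$ is immediate. For the inductive step I would compare $\gamma$ with the graph $\gamma'$ obtained by deleting the last edge $e_m$. The key local observation is that $w_i(\gamma)$ agrees with $w_i(\gamma')$ except that $e_m$ gets appended to exactly those walks whose endpoint $\h(w_i(\gamma'))$ is incident to $e_m$; since $i\mapsto \h(w_i(\gamma'))$ is a bijection of $[n]$ (it is the permutation $\pi(\omega')$, by Proposition \ref{prop:graphs} and invertibility of $\omega'$), deleting a self-edge changes no walk-adjacencies, whereas deleting an edge $e_m=\{a,b\}\in E_1$ removes exactly one $H$-edge, the one joining the start-vertices $p$ and $q$ of the two $\gamma'$-walks ending at $a$ and at $b$. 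Finally, because the walk from $p$ to $a$ (resp.\ from $q$ to $b$) is a walk in $\gamma'$, the vertices $p$ and $a$ (resp.\ $q$ and $b$) lie in the same component of $\gamma'$, so merging the $a$- and $b$-components in $\gamma$ matches merging the $p$- and $q$-components in $H(\gamma')$; together with the inductive hypothesis this yields the equality of component partitions for $\gamma$.

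The main obstacle I expect is the bookkeeping in this inductive step: checking carefully that appending the final edge modifies the family of walks only by extending the two walks ending at the endpoints of $e_m$, and that the induced change to $H$ is precisely one new edge whose endpoints track the correct $\gamma$-components. Once this local analysis is secured, the equality of component partitions—hence connectivity of $H$ for connected $\gamma$—follows, and the rooted spanning-tree construction above completes the proof.
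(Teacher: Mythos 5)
Your proposal is correct, but it takes a genuinely different route from the paper. The paper's proof is a greedy construction: it builds $v_0,v_1,\dots$ and $f_1,f_2,\dots$ recursively, and the crux is a claim that at each stage some edge of $E_1$ lies on \emph{exactly one} of the walks chosen so far; that claim is proved by contradiction, via a delicate case analysis on the relative ordering, at a suitably chosen vertex $i$, of the edges already covered by the chosen walks versus those not covered (the sets $E(i)'$ and $E(i)''$), invoking connectivity of $\gamma$ twice. Your argument instead globalizes the problem: you encode Proposition \ref{prop:steps} in the walk-adjacency graph $H$, observe (correctly) that conditions (1) and (2) amount to a rooted spanning tree of $H$ with distinct underlying $\gamma$-edges, and then prove the stronger structural statement that the component partition of $H$ equals that of $\gamma$ for \emph{every} (not necessarily connected) $\gamma$, by induction on $m$, peeling off the last edge. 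The key points of your induction all check out: since edges are traversed in increasing order, $e_m$ can only occur as the final step of a walk, so deleting it truncates exactly the walks whose endpoints lie on $e_m$; the endpoint map $i\mapsto \h(w_i(\gamma'))$ is a bijection by Proposition \ref{prop:graphs} applied to the invertible element $\omega'$, so a self-edge affects one walk and changes no $E_1$-adjacencies, while an $E_1$-edge $\{a,b\}$ creates exactly one new $H$-edge $\{p,q\}$ between the walks ending at $a$ and $b$; and since $w_p(\gamma')$, $w_q(\gamma')$ are walks in $\gamma'$, the merge of $H$-components of $p,q$ matches the merge of $\gamma'$-components of $a,b$. What the two approaches buy: the paper's argument is local and self-contained but its contradiction step is the most delicate point of the whole paper; your induction is cleaner, proves a more transparent invariant (equality of component partitions), and explains \emph{why} the lemma holds rather than just verifying that a greedy choice never gets stuck --- at the modest cost of introducing the auxiliary graph $H$ and carrying the stronger statement through the induction.
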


Before proving Lemma \ref{lemma:choices}, let's use it to count the number of ways to choose the remaining labels $k_1,\dots,k_{m_1}$. Choose a specific subset of edges $\{f_1,\dots,f_{n-1}\}\subseteq E$ and a labeling of vertices $[n]=\{v_0,\dots,v_{n-1}\}$ satisfying the conditions of Lemma \ref{lemma:choices}, and then consider any choice of edge labeling of the edges in $\{e_1,\dots,e_{m_1}\}\setminus\{f_1,\dots,f_{n-1}\}$; notice that there are $r^{m_1-n+1}$ such choices. Given any such choice, we now prove that there exists a unique way to label the remaining edges $f_1,\dots,f_{n-1}$ such that \eqref{eq:condition} holds for all $i=1,\dots,n$. 

First, notice that $f_{n-1}$ is the only unlabeled edge on $w_{v_{n-1}}$. Therefore, the constraint \eqref{eq:condition} for $i=v_{n-1}$ uniquely determines the label on $f_{n-1}$. After fixing the label on $f_{n-1}$, the only possible unlabeled edge on $w_{v_{n-2}}$ is $f_{n-2}$, so \eqref{eq:condition} with $i=v_{n-2}$ uniquely determines this label. Continuing this way in decreasing order, we see that the label on $f_\ell$ is uniquely determined by \eqref{eq:condition} with $i=v_\ell$ for all $\ell=n-1,\dots,1$. 

The choices we made in the previous paragraph for the labels on $f_1,\dots,f_{n-1}$ ensure that the constraint \eqref{eq:condition} holds for $i\in\{v_1,\dots,v_{n-1}\}$, but it would be natural to worry about whether the constraint also holds for $i=v_0$. Not to fret---we know that
\[
\omega(\v_{v_0})=\zeta_r^k\v_{\h(w_{v_0})}
\]
for some $k$, and using the validity of \eqref{eq:condition} for $i\in\{v_1,\dots,v_{n-1}\}$ along with condition \eqref{eq:updown}, we compute that
\[
\zeta_r^{k+\k(w_{v_1})+\cdots+\k(w_{v_{n-1}})}=\phi(\omega)=\zeta_r^{\k(w_{v_0})+\k(w_{v_1})+\cdots+\k(w_{v_{n-1}})},
\]
from which it follows that $\zeta_r^k=\zeta_r^{\k(w_{v_0})}$, proving that \eqref{eq:condition} holds for $i=v_0$. Thus, every one of the $r^{m_1-n+1}$ choices for the labels in $E_1\setminus\{f_1,\dots,f_{n-1}\}$ can be extended uniquely to a labeling of the edges $E_1$ that satisfies \eqref{eq:condition}, proving that the total number of ways to label the edges in $E_1$ is $r^{m_1-n+1}$.

This concludes our counting arguments, and the only task that remains is to prove the lemma.

\begin{proof}[Proof of Lemma \ref{lemma:choices}]
Let $\gamma\in \widetilde\Gamma_{m}(r,s,n)$ be a connected graph and choose a single vertex $v_0\in[n]$. We recursively define vertices $v_1,\dots,v_{n-1}$ and edges $f_1,\dots,f_{n-1}$ that satisfy the two conditions in the lemma. Suppose we are given $v_0,\dots,v_\ell$ and $f_1,\dots,f_\ell$ satisfying the conditions of the lemma (if $\ell=0$, then the conditions of the lemma are vacuous). If $\ell=n-1$, then we are done. If not, we claim (proved below) that there exists an edge $f_{\ell+1}\in E_1\setminus\{f_1,\dots,f_\ell\}$ such that $f_{\ell+1}$ is a step in exactly one of the walks $w_{v_0},\dots,w_{v_\ell}$. Then Proposition \ref{prop:steps} implies that there must be some other vertex $v_{\ell+1}\in[n]\setminus\{v_0,\dots,v_\ell\}$ such that $f_{\ell+1}$ is also a step in $v_{\ell+1}$. We then conclude the recursive step by noticing that $v_0,\dots,v_{\ell+1}$ and $f_1,\dots,f_{\ell+1}$ satisfy the two conditions in the lemma.

To prove the claim in the previous paragraph, suppose towards a contradiction that every edge in $E_1$ that occurs as a step on one of the walks $w_{v_0},\dots,w_{v_\ell}$ actually occurs as a step on two of them. Since $\ell<n-1$, we know that $[n]\setminus\{v_0,\dots,v_\ell\}\neq\emptyset$, and for any $i\in[n]\setminus\{v_0,\dots,v_\ell\}$, we know from Proposition \ref{prop:steps} that the walk $w_i$ cannot share any edges in common with $w_{v_0},\dots,w_{v_\ell}$. Since $\gamma$ is connected, each walk has at least one edge, and it follows that there must be at least one edge in $\gamma$ that is not a step in any of the walks $w_{v_0},\dots,w_{v_\ell}$. Using again that $\gamma$ is connected, it follows that there must be such an edge in $E_1$ that shares a vertex with at least one of the walks $w_{v_0},\dots,w_{v_\ell}$. Choose a vertex $i\in[n]$ such that $i$ is a vertex in at least one of the walk $w_{v_0},\dots,w_{v_\ell}$ and such that there exists an edge $e=\{i,j\}\in E_1$ containing $i$ that is not a step in any of these walks.

Let $E(i)\preceq E_1$ be the ordered set of edges that contain $i$ and let $E(i)'\preceq E(i)$ be those edges that are steps in one (and thus, by assumption, two) of the walks $w_{v_0},\dots,w_{v_\ell}$, with complement $E(i)''=E(i)\setminus E(i)'$. By the choice of $i$ in the previous paragraph, both $E(i)'$ and $E(i)''$ are nonempty. If all of the vertices in $E(i)'$ come before the vertices in $E(i)''$, then the unique walk in $w_{v_0},\dots,w_{v_\ell}$ that approaches $i$ along the last edge in $E(i)'$ must eventually depart from $i$ along the first edge in $E(i)''$, a contradiction of the definition of $E(i)''$ (notice that, before departing $i$ for another vertex, the walk might loop along any number of edges in $E_2$). Similarly, if all of the edges in $E(i)'$ come after the vertices in $E(i)''$, then the unique walk that does not belong to $w_{v_0},\dots,w_{v_\ell}$ and that approaches $i$ along the last edge of $E(i)''$ must eventually depart along the first edge of $E(i)'$, which already belongs to two distinct walks, contradicting Proposition \ref{prop:steps}. Finally, in the remaining cases, we can always find $f_1,f_2\in E(i)'$ such that the set of edges in $E(i)''$ between $f_1$ and $f_2$ is nonempty. In this case, the unique walk in $w_{v_0},\dots,w_{v_\ell}$ that approaches $i$ along $f_1$ must depart along the first edge in $E(i)''$ that appears after $f_1$, a contradiction of the definition of $E(i)''$. 

Since every case leads to a contradiction, we conclude that we can always choose $f_{\ell+1}$ as in the first paragraph of the proof, and the proof of the lemma is complete.
\end{proof}

We have now proved Equation \eqref{eq:comparison1}; to finish the proof of Theorem \ref{theorem:comparison}, it therefore remains to prove Equation \eqref{eq:comparison2}. In the next proposition, we accomplish this by computing the cyclic factorization numbers $f_m^\kappa$ for any $\kappa\in\mu_{r}$. This result was essentially proved by Chapuy and Stump in \cite{ChapuyStump}, though they only considered the case where $\kappa$ is a generator. The proof presented here is a modification of theirs.

\begin{proposition}\label{prop:cyclicfactors}
For any integer $r\geq 2$ and element $\kappa\in\mu_r$,
\[
f_m^\kappa=\begin{cases}
\frac{1}{r}\left((r-1)^m-(-1)^m\right) & \text{ if } \kappa\neq 1,\\
\frac{1}{r}\left((r-1)^{m}-(-1)^{m}\right)+(-1)^{m} & \text{ if } \kappa=1.
\end{cases}
\]
\end{proposition}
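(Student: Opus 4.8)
The plan is to set up a one-step recursion in $m$ and solve it. Write $f_m^\kappa$ as the number of ordered $m$-tuples $(a_1,\dots,a_m)$ of nontrivial elements of $\mu_r$ with $a_1\cdots a_m=\kappa$. Conditioning on the last factor $a_m$, note that as $a_m$ ranges over the $r-1$ nontrivial elements, the required partial product $\kappa a_m^{-1}$ ranges over every element of $\mu_r$ \emph{except} $\kappa$ itself. Since the total number of $(m-1)$-tuples of nontrivial elements is $(r-1)^{m-1}$ regardless of their target product, summing the preceding counts gives the key identity
\[
f_m^\kappa=\sum_{\beta\neq\kappa}f_{m-1}^\beta=(r-1)^{m-1}-f_{m-1}^\kappa,
\]
which is pleasantly independent of $\kappa$ apart from the subtracted term.

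Next I would solve this linear recurrence. The homogeneous part $f_m=-f_{m-1}$ contributes $C(-1)^m$, and seeking a particular solution of the form $A(r-1)^m$ forces $A(r-1)=1-A$, i.e. $A=1/r$. Hence $f_m^\kappa=\tfrac1r(r-1)^m+C_\kappa(-1)^m$ for a constant $C_\kappa$ pinned down by the initial condition. Using $f_0^\kappa=1$ if $\kappa=1$ and $f_0^\kappa=0$ otherwise (the empty product is the identity), I obtain $C_\kappa=(r-1)/r$ when $\kappa=1$ and $C_\kappa=-1/r$ when $\kappa\neq1$. Substituting back reproduces exactly the two cases in the statement, and I would check both against the base case $m=1$ (a single factor equal to $\kappa$ is the only option when $\kappa\neq1$, none when $\kappa=1$) as a sanity test.

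There is no serious obstacle here; the only thing to get right is the bookkeeping in the recursion, namely that the constraint $a_m\neq1$ is precisely what excludes the single target $\beta=\kappa$, together with the correct initial condition at $m=0$. An equally short alternative, closer to the character-theoretic methods of Chapuy and Stump, is to apply the discrete Fourier transform over $\mu_r$: writing $f_m^\kappa=\tfrac1r\sum_{j}\bar\chi_j(\kappa)\bigl(\sum_{a\neq1}\chi_j(a)\bigr)^m$, the trivial character contributes $(r-1)^m$ while each nontrivial character contributes $(-1)^m$ (since $\sum_{a\in\mu_r}\chi_j(a)=0$ forces $\sum_{a\neq1}\chi_j(a)=-1$), and orthogonality repackages $\sum_{j\neq0}\bar\chi_j(\kappa)$ as $r\cdot[\kappa=1]-1$, yielding the formula directly. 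I would present the recursion as the main argument, since it is entirely self-contained, while noting that the Fourier derivation transparently explains the shape of the answer.
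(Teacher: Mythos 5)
Your proof is correct and takes essentially the same route as the paper: both arguments rest on the recursion $f_m^\kappa=(r-1)^{m-1}-f_{m-1}^\kappa$, obtained by observing that the final factor is uniquely determined by, and must differ from, the product of the first $m-1$ factors, together with the initial values $f_0^\kappa=1$ if $\kappa=1$ and $f_0^\kappa=0$ otherwise. The only cosmetic difference is that you solve the linear recurrence constructively (and sketch a character-sum alternative), whereas the paper simply verifies that the two stated closed forms satisfy the recursion and the initial conditions.
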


\begin{proof}
Choose $m-1$ nontrivial elements $\zeta_r^{k_1},\dots,\zeta_r^{k_{m-1}}\in\mu_r$ and notice that their product $\zeta_r^{k_{m-1}}\cdots\zeta_r^{k_1}$ can be extended to a factorization of $\kappa$ into $m$ nontrivial elements
\[
\zeta_r^{k_m}\zeta_r^{k_{m-1}}\cdots\zeta_r^{k_1}=\kappa
\]
if and only if $\zeta_r^{k_{m-1}}\cdots\zeta_r^{k_{1}}\neq \kappa$. In other words, the number of factorizations into $m$ nontrivial elements is the total number of products of $m-1$ nontrivial elements minus those that multiply to $\kappa$. Thus, we obtain the following recursion:
\begin{equation}\label{eq:recursion}
f_m^\kappa=(r-1)^{m-1}-f_{m-1}^\kappa.
\end{equation}
It is straightforward to check that both of the formulas in the statement of the proposition satisfy the recursion in \eqref{eq:recursion}. The necessity for the two different formulas arises from the different initial values: 
\[
f_0^\kappa=\begin{cases}
0 &\text{ if } \kappa\neq 1,\\
1 &\text{ if } \kappa=1.
\end{cases}
\]
The validity of both of these initial values can also be checked directly from the formulas appearing in the statement of the proposition.
\end{proof}

\subsection{Polynomiality}\label{subsec:polynomiality}

The polynomial structure of all factorization numbers of $G(r,s,n)$ is now a direct application of Theorem \ref{theorem:comparison}.

\begin{theorem}\label{theorem:mainresult}
Fix $r,s\in\Z_{>0}$ such that $s\mid r$. For any $g,\ell\in\Z_{\geq 0}$, there exist two symmetric polynomials
\[
P_{g,\ell}^0,P_{g,\ell}^1\in \Q[x_1,\dots,x_\ell]
\]
depending on $r$ and $s$ such that, if $\pi(\omega)$ has cycle type $(n_1,\dots,n_\ell)$ and $g=\frac{1}{2}(m-n-\ell+2)$, then
\[
\widetilde f_m^\omega=\frac{m!}{r^{n-1}}\prod_{i=1}^\ell \frac{n_i^{n_i+1}}{n_i!}P_{g,\ell}^{\delta(\omega)}(n_1,\dots,n_\ell).
\]
In addition, the nonzero terms in the polynomials $P_{g,\ell}^i$ all have degrees in the interval 
\[
[2g-3+\ell,3g-3+\ell].
\]
\end{theorem}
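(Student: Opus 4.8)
The plan is to obtain Theorem~\ref{theorem:mainresult} as a direct consequence of the comparison formula (Theorem~\ref{theorem:comparison}) together with the ELSV formula, by summing over the number $m_2$ of self-edges. First I would write
\[
\widetilde f_m^\omega = \sum_{m_1+m_2=m} \widetilde f_{m_1,m_2}^\omega,
\]
substitute Theorem~\ref{theorem:comparison} into each summand, and then insert the ELSV formula for the symmetric-group count $\widetilde f_{m_1}^{\pi(\omega)}$, whose associated genus is $g' = \tfrac12(m_1-n-\ell+2)$. Using $\binom{m}{m_1}m_1! = m!/m_2!$, the prefactor $\prod_i n_i^{n_i+1}/n_i!$ (and the factorial) pulls out of the sum cleanly, leaving a sum over $m_2$ of reciprocal factorials $1/m_2!$, powers of $r$, cyclic counts $f_{m_2}^{\phi(\omega)}$, and ELSV polynomials $P_{g',\ell}$.

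Two observations then do the real work. The first is a parity constraint: since the sign of $\pi(\omega)$ is fixed, $\widetilde f_{m_1}^{\pi(\omega)}$ vanishes unless $m_1 \equiv n-\ell \pmod 2$; as $g$ is assumed integral we have $m \equiv n+\ell \pmod 2$, so only even $m_2$ contribute, and I would reindex by $g' = g - m_2/2 \in \{0,1,\dots,g\}$ to make the sum finite. The second, and the crucial point for polynomiality, is that $m_1 - n + 1 = 2g'+\ell-1$ depends only on $g'$ and $\ell$ and not on $n$; this is exactly what cancels the otherwise exponential-in-$n_i$ powers of $r$ and leaves a genuine polynomial. Collecting terms, I would set
\[
P_{g,\ell}^{\delta(\omega)}(x_1,\dots,x_\ell) = \sum_{g'=0}^{g} c_{g,g',\ell}^{\delta(\omega)}\,(x_1+\cdots+x_\ell)^{2(g-g')}\,P_{g',\ell}(x_1,\dots,x_\ell),
\]
where each constant $c_{g,g',\ell}^{\delta(\omega)}$ is an explicit product of a power of $r$, a reciprocal factorial, and the cyclic count $f_{2(g-g')}^{\phi(\omega)}$ from \eqref{eq:comparison2}. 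Since that cyclic count depends on $\omega$ only through $\delta(\omega)$ (and $(-1)^{m_2}=1$ for even $m_2$), precisely the two polynomials $P_{g,\ell}^0, P_{g,\ell}^1$ arise; symmetry is immediate because each $P_{g',\ell}$ and each power of $x_1+\cdots+x_\ell$ is symmetric.

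For the degree statement I would argue summand by summand. The factor $(x_1+\cdots+x_\ell)^{2(g-g')}$ has degree exactly $2(g-g')$, and by ELSV the nonzero terms of $P_{g',\ell}$ have degrees in $[2g'-3+\ell,\,3g'-3+\ell]$, so the $g'$-summand contributes only in degrees $[2g-3+\ell,\,2g+g'-3+\ell]$. The lower endpoint is independent of $g'$, while the upper endpoint is largest at $g'=g$, namely $3g-3+\ell$; since forming a sum can only cancel monomials and never create a term of a new degree, every nonzero term of $P_{g,\ell}^{\delta}$ lies in $[2g-3+\ell,\,3g-3+\ell]$.

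Given that Theorem~\ref{theorem:comparison} and the ELSV formula are in hand, the argument is essentially a finite manipulation, so I expect the main obstacle to be twofold. First, one must confirm honestly that the $r$- and $n$-dependence really does collapse to something polynomial: the identity $m_1-n+1 = 2g'+\ell-1$ is the load-bearing step, and without it the claim fails. Second, one must handle the unstable or low-dimensional cases of ELSV — for example $g'=0$ with $\ell\le 2$, where $\overline{\mathcal{M}}_{g',\ell}$ is empty and the interval $[2g'-3+\ell,3g'-3+\ell]$ is degenerate — so that the degree-interval assertion is read with the convention $P_{g',\ell}=0$ in those cases. Everything else, including the pleasant fact that the degree bounds align exactly, follows from the bookkeeping above.
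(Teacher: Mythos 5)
Your outline is, in skeleton, the same as the paper's proof: expand $\widetilde f^\omega_m=\sum_{m_1+m_2=m}\widetilde f^\omega_{m_1,m_2}$, substitute Theorem \ref{theorem:comparison}, insert the ELSV formula for $\widetilde f_{m_1}^{\pi(\omega)}$, pull out $m!\prod_i n_i^{n_i+1}/n_i!$ via $\binom{m}{m_1}m_1!=m!/m_2!$, and check the degree window summand by summand exactly as the paper does (lower endpoint $2g-3+\ell$ for every summand, upper endpoint maximized at $g'=g$). Your explicit parity reindexing by $g'=g-m_2/2$ is only implicit in the paper (terms with $g(m_1)\notin\Z_{\geq 0}$ are read as zero); that difference is cosmetic.

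The substantive divergence is the handling of the powers of $r$, and here your conclusion no longer matches the statement you were asked to prove. Because you keep $r^{m_1-n+1}=r^{2g'+\ell-1}$ intact — correctly insisting that the coefficients $c^{\delta}_{g,g',\ell}$ be independent of $n$ — the identity your bookkeeping produces is
\[
\widetilde f^\omega_m= m!\,\prod_{i=1}^\ell\frac{n_i^{n_i+1}}{n_i!}\,P^{\delta(\omega)}_{g,\ell}(n_1,\dots,n_\ell),
\]
with prefactor $m!$, not the stated $m!/r^{n-1}$. The paper instead splits $r^{m_1-n+1}=r^{m_1}\cdot r^{-(n-1)}$, moves $r^{-(n-1)}$ into the prefactor, and keeps $r^{m_1}$ inside the coefficients of its polynomial; but on the support of the sum $m_1=2g'+n+\ell-2$, so those coefficients depend on $n$, and the paper's displayed definition of $P^{\delta}_{g,\ell}$ does not define a single polynomial valid for all $n$. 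The two normalizations differ by a factor of $r^{n-1}$, and only yours can be polynomial: for instance, with $r=2$, $s=1$, $g=0$, $\ell=3$, $\delta(\omega)=1$, the comparison formula together with Hurwitz's genus-zero formula gives $\widetilde f^\omega_{n+1}=4\,(n+1)!\prod_i n_i^{n_i+1}/n_i!$, so the stated normalization would force $P^1_{0,3}(n_1,n_2,n_3)=2^{\,n_1+n_2+n_3+1}$, which is not a polynomial. In short, your derivation is the correct repair of the argument, but a complete solution cannot silently end by asserting the displayed formula of the theorem: you must either flag that the prefactor should be $m!$ (with the $n$-independent powers $r^{2g'+\ell-1}$ absorbed into the coefficients), or concede that the statement, as literally written, admits no proof.

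A smaller but genuine slip: your proposed convention $P_{g',\ell}=0$ in the unstable cases $(g',\ell)\in\{(0,1),(0,2)\}$ would falsify the counting identity, since those summands do contribute; e.g.\ for $g=1$, $\ell=1$, the $g'=0$ term equals $n^2\binom{n+1}{2}f^{\delta(\omega)}_2\,n^{n-2}$, which is nonzero whenever $f^{\delta(\omega)}_2\neq 0$ (for instance $\delta(\omega)=1$ and $r>s$). The convention under which both ELSV and your sum remain correct is the standard Laurent one, $P_{0,1}(x_1)=x_1^{-2}$ and $P_{0,2}(x_1,x_2)=(x_1+x_2)^{-1}$; then $(x_1+\cdots+x_\ell)^{2(g-g')}P_{g',\ell}$ is an honest polynomial whenever $g'<g$, of degree $2g-2$ (resp.\ $2g-1$), which still lies in the claimed window $[2g-3+\ell,3g-3+\ell]$.
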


\begin{proof}
Applying Theorem \ref{theorem:comparison}, we have
\[
\widetilde f_m^\omega=\sum_{m_1=0}^m\left(r^{m_1-n+1}n^{m-m_1}{m \choose m_1} f_{m-m_1}^{\phi(\omega)}\right)\widetilde f_{m_1}^{\pi(\omega)}.
\]
If $\pi(\omega)$ has cycle type $(n_1,\dots,n_\ell)$, the ELSV formula then implies that
\[
\widetilde f_m^\omega=\sum_{m_1=0}^m\left(r^{m_1-n+1}n^{m-m_1}{m \choose m_1} f_{m-m_1}^{\phi(\omega)}\right)m_1!\prod_{i=1}^\ell \frac{n_i^{n_i+1}}{n_i!}P_{g(m_1),\ell}(n_1,\dots,n_\ell)
\]
where
\[
g(m_1)=\frac{1}{2}(m_1-n-\ell+2)
\]
and the nonzero terms in $P_{g(m_1),\ell}$ have degrees in the interval 
\[
[2g(m_1)-3+\ell,3g(m_1)-3+\ell].
\]
Reorganizing terms, we see that
\[
\widetilde f_m^\omega=\frac{m!}{r^{n-1}}\prod_{i=1}^\ell  \frac{n_i^{n_i+1}}{n_i!}\sum_{m_1=0}^m\left(r^{m_1} \frac{f_{m-m_1}^{\phi(\omega)}}{(m-m_1)!}\right)n^{m-m_1}P_{g(m_1),\ell}(n_1,\dots,n_\ell),
\]
and we define
\[
P_{g,\ell}^{\delta(\omega)}(x_1,\dots,x_\ell)=\sum_{m_1=0}^m\left(r^{m_1} \frac{f_{m-m_1}^{\phi(\omega)}}{(m-m_1)!}\right)(x_1+\cdots+x_\ell)^{m-m_1}P_{g(m_1),\ell}(x_1,\dots,x_\ell).
\]
Observe that the degree of the nonzero terms in the $m_i$th summand are at least
\[
m-m_i+2g(m_1)-3+\ell=2g-3+\ell
\]
and at most
\begin{align*}
m-m_i+3g(m_1)-3+\ell&=m-m_1+\frac{3}{2}(m_1-n-l+2)-3+\ell\\
&\leq \frac{3}{2}(m-n-l+2)-3+\ell\\
&=3g-3+\ell,
\end{align*}
where the inequality in the second line uses the fact that $m_1\leq m$.
\end{proof}

\subsection{Generating series}\label{subsec:series}

For any element $\omega\in G(r,s,n)$, define the generating series of factorizations and of connected factorizations, respectively, by
\[
f^\omega(x)=\sum_{m\geq 0}f_{m}^\omega\frac{x^m}{m!} \;\; \text{ and }\;\;\widetilde f^\omega(x)=\sum_{m\geq 0}\widetilde f_{m}^\omega\frac{x^m}{m!}  
\]
where $x$ is a formal variable. The next result shows how Theorem \ref{theorem:comparison} can be interpreted in terms of these generating series.

\begin{corollary}\label{corollary:series}
For any $\omega\in G(r,s,n)$,
\[
\widetilde f^\omega(x)=\frac{1}{r^{n-1}}f^{\phi(\omega)}(nx)\widetilde f^{\pi(\omega)}(rx)
\]
where
\[
f^{\phi(\omega)}(x)=\frac{1}{r/s}\left(e^{(r/s-1)x}-e^{-x} \right)+\delta(\omega)e^{-x}.
\]
\end{corollary}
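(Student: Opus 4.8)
The plan is to recognize Corollary \ref{corollary:series} as the generating-series shadow of Theorem \ref{theorem:comparison}. First I would sum the comparison formula \eqref{eq:comparison1} over all ways of writing $m=m_1+m_2$. Since connected graphs with $m$ edges split according to the number of self-edges, we have $\widetilde f_m^\omega=\sum_{m_1+m_2=m}\widetilde f_{m_1,m_2}^\omega$, and substituting \eqref{eq:comparison1} gives
\[
\widetilde f_m^\omega=\sum_{m_1+m_2=m}r^{m_1-n+1}n^{m_2}\binom{m_1+m_2}{m_1}f_{m_2}^{\phi(\omega)}\,\widetilde f_{m_1}^{\pi(\omega)}.
\]

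The crucial algebraic observation is that when $m=m_1+m_2$ the factor $\tfrac{1}{m!}\binom{m_1+m_2}{m_1}$ collapses to $\tfrac{1}{m_1!\,m_2!}$, which is exactly the weight needed to turn this convolution into a product of exponential generating series. Multiplying the displayed identity by $x^m/m!$ and summing over $m\ge 0$, I would reindex the resulting double sum over independent $m_1,m_2\ge 0$ (replacing $x^m$ by $x^{m_1}x^{m_2}$) to obtain
\[
\widetilde f^\omega(x)=r^{1-n}\Bigl(\sum_{m_1\ge 0}\widetilde f_{m_1}^{\pi(\omega)}\frac{(rx)^{m_1}}{m_1!}\Bigr)\Bigl(\sum_{m_2\ge 0}f_{m_2}^{\phi(\omega)}\frac{(nx)^{m_2}}{m_2!}\Bigr),
\]
where I have used $r^{m_1-n+1}x^{m_1}=r^{1-n}(rx)^{m_1}$ and $n^{m_2}x^{m_2}=(nx)^{m_2}$. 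The two factors are precisely $\widetilde f^{\pi(\omega)}(rx)$ and $f^{\phi(\omega)}(nx)$, which is the stated product formula.

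It then remains to identify $f^{\phi(\omega)}(x)$ explicitly. Here I would substitute the closed form for the cyclic factorization numbers from \eqref{eq:comparison2} into $f^{\phi(\omega)}(x)=\sum_{m\ge 0}f_m^{\phi(\omega)}x^m/m!$ and evaluate the three resulting exponential sums term by term, using $\sum_{m}(r/s-1)^mx^m/m!=e^{(r/s-1)x}$ and $\sum_m(-1)^mx^m/m!=e^{-x}$. Collecting these gives
\[
f^{\phi(\omega)}(x)=\frac{1}{r/s}\bigl(e^{(r/s-1)x}-e^{-x}\bigr)+\delta(\omega)e^{-x},
\]
completing the proof.

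I do not expect a genuine obstacle: everything takes place in the ring of formal power series, so there are no convergence concerns, and the only real content is the bookkeeping identity $\tfrac{1}{m!}\binom{m_1+m_2}{m_1}=\tfrac{1}{m_1!\,m_2!}$ that converts the summation of Theorem \ref{theorem:comparison} into a genuine product. The one small point worth checking is the degenerate case $r=s$, where $\mu_{r/s}$ is trivial and $R_2=\varnothing$: there \eqref{eq:comparison2} yields $f_0^{\phi(\omega)}=1$ and $f_{m_2}^{\phi(\omega)}=0$ for $m_2>0$, so $f^{\phi(\omega)}(x)=1$ and the formula correctly reduces to $\widetilde f^\omega(x)=r^{1-n}\widetilde f^{\pi(\omega)}(rx)$, consistent with the absence of self-edges.
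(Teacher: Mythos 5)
Your proposal is correct and follows essentially the same route as the paper: the paper's proof also expands $\widetilde f^\omega(x)$ as a double sum over $m_1,m_2$, substitutes Theorem \ref{theorem:comparison}, uses the identity $\binom{m_1+m_2}{m_1}/(m_1+m_2)!=1/(m_1!\,m_2!)$ to factor the sum into $\frac{1}{r^{n-1}}f^{\phi(\omega)}(nx)\widetilde f^{\pi(\omega)}(rx)$, and then obtains the closed form for $f^{\phi(\omega)}(x)$ from Proposition \ref{prop:cyclicfactors} together with the Taylor series of $e^x$. Your additional sanity check of the degenerate case $r=s$ is correct but not needed for the argument.
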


\begin{proof}
Applying Theorem \ref{theorem:comparison}, we compute
\begin{align*}
\widetilde f^\omega(x)&=\sum_{m\geq 0}\widetilde f_{m}^\omega\frac{x^m}{m!}\\
&=\sum_{m_1,m_2\geq 0}\widetilde f_{m_1,m_2}^\omega\frac{x^{m_1+m_2}}{(m_1+m_2)!}\\
&=\sum_{m_1,m_2\geq 0}\left(r^{m_1-n+1}n^{m_2}{m_1+m_2 \choose m_1} f_{m_2}^{\phi(\omega)}\right)\widetilde f_{m_1}^{\pi(\omega)}\frac{x^{m_1+m_2}}{(m_1+m_2)!}\\
&=\frac{1}{r^{n-1}}\sum_{m_1,m_2\geq 0}\left(f_{m_2}^{\phi(\omega)}\frac{(nx)^{m_2}}{m_2!} \right)\left(\widetilde f_{m_1}^{\pi(\omega)}\frac{(rx)^{m_1}}{m_1!} \right)\\
&=\frac{1}{r^{n-1}}f^{\phi(\omega)}(nx)\widetilde f^{\pi(\omega)}(rx),
\end{align*}
proving the first equation in the theorem. The second equation follows from Proposition \ref{prop:cyclicfactors} along with the Taylor series expression for the exponential function
\[
e^x=\sum_{m\geq 0}\frac{x^m}{m!}.
\vspace{-20bp}
\]
\end{proof}

If $\omega\in S_n$ is a long cycle, then Jackson computed an explicit formula for the factorization series of $\omega$ in \cite{Jackson}:
\[
f^\omega(x)=\frac{1}{n!}\left(e^{x\frac{n}{2}}-e^{-x\frac{n}{2}} \right)^{n-1}.
\]
We've omitted the tilde because all factorizations of the long cycle are connected. Using this, we obtain the following generalization to long cycles in $G(r,s,n)$.

\begin{corollary}\label{corollary:explicit}
If $\omega\in G(r,s,n)$ such that $\pi(\omega)$ is a long cycle, then
\[
f^\omega(x)=\frac{1}{n!r^{n-1}}f^{\phi(\omega)}(nx)\left(e^{x\frac{rn}{2}}-e^{-x\frac{rn}{2}} \right)^{n-1}
\]
where
\[
f^{\phi(\omega)}(x)=\frac{1}{r/s}\left(e^{(r/s-1)x}-e^{-x} \right)+\delta(\omega)e^{-x}.
\]
\end{corollary}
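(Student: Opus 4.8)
The plan is to obtain this statement as an immediate consequence of Corollary \ref{corollary:series} together with Jackson's formula, once we verify that every factorization of $\omega$ is connected. Granting this connectedness, we have $f^\omega(x)=\widetilde f^\omega(x)$, so the left-hand side is governed directly by Corollary \ref{corollary:series}, and the only remaining work is to substitute the known expression for the factorization series of a long cycle in $S_n$ and to simplify the resulting constants.

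First I would establish that all factorizations of $\omega$ are connected. The key point is that $\pi(\omega)$ is a long cycle, hence an $n$-cycle, and an $n$-cycle alone generates a subgroup of $S_n$ acting transitively on $\{1,\dots,n\}$. Given any factorization $(\rho_1,\dots,\rho_m)\in F_m^\omega$, applying $\pi$ shows that $\pi(\rho_m)\cdots\pi(\rho_1)=\pi(\omega)$ lies in the subgroup generated by $\pi(\rho_1),\dots,\pi(\rho_m)$; since $\pi(\omega)$ already acts transitively, so does this larger subgroup. By the proposition characterizing connected factorizations in terms of transitivity of the projected factors, the factorization is connected. As this holds for every factorization, $f_m^\omega=\widetilde f_m^\omega$ for all $m$, and therefore $f^\omega(x)=\widetilde f^\omega(x)$.

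Next I would invoke Corollary \ref{corollary:series}, which gives
\[
f^\omega(x)=\widetilde f^\omega(x)=\frac{1}{r^{n-1}}f^{\phi(\omega)}(nx)\,\widetilde f^{\pi(\omega)}(rx).
\]
Since $\pi(\omega)$ is a long cycle, every factorization of $\pi(\omega)$ in $S_n$ is transitive, so $\widetilde f^{\pi(\omega)}(x)=f^{\pi(\omega)}(x)$, and Jackson's formula supplies $f^{\pi(\omega)}(x)=\frac{1}{n!}\big(e^{xn/2}-e^{-xn/2}\big)^{n-1}$. Evaluating at $rx$ yields $\widetilde f^{\pi(\omega)}(rx)=\frac{1}{n!}\big(e^{xrn/2}-e^{-xrn/2}\big)^{n-1}$, and substituting this into the displayed expression and collecting the factor $\tfrac{1}{n!}$ produces exactly the claimed formula, with the expression for $f^{\phi(\omega)}$ carried over verbatim from Corollary \ref{corollary:series}.

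There is no genuinely hard computational step here; the corollary is essentially a specialization. The only place where an argument is actually needed—and hence the main obstacle, such as it is—is the connectedness claim of the second paragraph, since without it we would control only $\widetilde f^\omega$ rather than $f^\omega$ itself. Everything else amounts to substitution and a rescaling of the formal variable.
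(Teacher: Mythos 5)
Your proof is correct and takes essentially the same route as the paper: the corollary is obtained by specializing Corollary \ref{corollary:series} using Jackson's formula, with the rescaling $x\mapsto rx$. The paper leaves the connectedness claim implicit (it only remarks that factorizations of the long cycle in $S_n$ are transitive), so your explicit argument that every factorization of $\omega$ itself is connected---needed to replace $\widetilde f^\omega$ by $f^\omega$ on the left-hand side---is a detail the paper glosses over, not a divergence in approach.
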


As a special case, Corollary \ref{corollary:explicit} recovers the main result of Chapuy and Stump \cite{ChapuyStump} for the group $G(r,1,n)$. More generally, Corollary \ref{corollary:series} should be thought of as a reduction from $G(r,s,n)$ to $S_n$---it computes an explicit formula for $\tilde f^\omega(x)$ whenever we happen to know an explicit formula for $\tilde f^{\pi(\omega)}(x)$.

\bibliographystyle{alpha}
\bibliography{references}

\end{document}